\newtheorem{lemma}{Lemma}
\newtheorem{theorem}{Theorem}
\newtheorem{remark}{Remark}
\begin{document}
 
\title{Ahlfors-regular curves and Carleson measures }

\author{Huaying Wei \thanks{Department of Mathematics and Statistics, Jiangsu Normal University, Xuzhou 221116, PR China. Email:  6020140058@jsnu.edu.cn.  } ,   Michel Zinsmeister \thanks{IDP, Universit\'e d' Orl\'eans,45067 Orl\'eans  Cedex 2, France. Email: zins@univ-orleans.fr}}
\date{}
\maketitle

\begin{center}
\begin{minipage}{120mm}
{\small{\bf Abstract}.
We study the relation between the boundary of a simply connected domain $\Omega$ being Ahlfors-regular and  the invariance of Carleson measures under the push-forward operator induced by a conformal  mapping from the unit disk $\Delta$ onto the domain $\Omega$. As an application, we characterize the chord-arc curve with small norm and the asymptotically smooth curve in terms of the complex dilatation of some quasiconformal reflection with respect to the curve. 
}

\end{minipage}
\end{center}

{\small{\bf Key words and phrases} \,\,\, Ahlfors-regular curves, \ Carleson measures, \ push-forward operators, \ chord-arc curves, \ asymptotically smooth curves.}

{\small{\bf 2010 Mathematics Subject Classification} \,\,\, 30C62,\ 53A04,\ 28A75.} \vskip1cm

\section{Introduction}
Let  $\Delta = \{z:\,  |z| < 1\}$ denote the unit disk in the extended complex plane $\hat{\mathbb{C}}$. $\Delta^{*} = \hat{\mathbb{C}}-\bar{\Delta}$ is the exterior of $\Delta$, and $\mathbb{S} = \partial\Delta = \partial\Delta^{*}$ is the unit circle.  Also, as usual, 
$D(z, r)$ denotes the open disk with center $z$ and radius $r$, and the notation $\Lambda^{1}$ denotes the one-dimensional Hausdorff measure. 

A positive measure $\mu$ defined on a simply connected domain $\Omega$ is called a Carleson measure (see \cite{Ga}) if
\begin{equation}\label{carlesonnorm}
\|\mu\|_{*} = \sup \Big\{\frac{\mu(\Omega \cap D(z, r))}{r}: z \in \partial \Omega, 0 < r < diameter(\partial\Omega)\Big\} < \infty.
\end{equation}
A Carleson measure $\mu$ is called a vanishing Carleson measure if $\lim_{r\rightarrow 0}\mu(\Omega \cap D(z, r))/r = 0$ uniformly for $z \in \partial \Omega$. We denote by $CM(\Omega)$ and $CM_0(\Omega)$ the set of all Carleson measures and vanishing Carleson measures on $\Omega$, respectively. It is easy to see that $CM(\Omega)$ is a Banach space with the Carleson norm $\|\cdot\|_{*}$.

Let $\varphi$ be a conformal mapping from the unit disk $\Delta$ onto a simply connected domain $\Omega$. For any $\mu \in CM(\Omega)$, the pull-back of $\mu$ is the measure defined on $\Delta$ by
$$ \varphi^{*}d\mu=|\varphi^{'}|^{-1}d(\mu\circ \varphi).$$
It should be pointed out that if $\mu \in CM(\Omega)$ is absolutely continuous (with respect to Lebesgue measure), that is,  if  there exists a function $\lambda \in L^{1}$ such that 
$$d\mu(z) = \lambda(z)dxdy,$$
then, writing $d\nu=\varphi^*d\mu$, $$d\nu(\zeta) = \lambda\circ\varphi(\zeta) |\varphi^{'}(\zeta)|d\xi d\eta.$$

For a Carleson measure $\nu$ on $\Delta$ we define similarly the push-forward of $\nu$ as being the measure on $\Omega$ defined by 
$$(\varphi^{-1})^{*}d\nu = |(\varphi^{-1})^{'}|^{-1}d(\nu\circ \varphi^{-1}).$$

If $\Omega=\Delta$, these two operators are isomorphisms of $CM(\Delta)$  (one being the reciprocal of the other): this is another way of stating  the conformally invariant character of Carleson measures on $\Delta$ as in \cite[p.231]{Ga}. 

In 1989, Zinsmeister \cite{Zi89} proved the following
\begin{theorem}\label{Z1}
Let $\varphi$ be a conformal mapping from the unit disk $\Delta$ onto a simply connected domain $\Omega$. Then $\log \varphi^{'} \in BMOA(\Delta)$ if and only if the pull-back operator $\varphi^{*}$ is bounded from $CM(\Omega)$ to $CM(\Delta)$.
\end{theorem}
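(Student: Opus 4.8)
The plan is to reformulate boundedness of $\varphi^{*}$ as a family of Carleson--box estimates and to evaluate those by transferring the geometry between $\Delta$ and $\Omega$ through Koebe's distortion theorem, with $BMOA$ entering via the John--Nirenberg inequality.

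First I would record two standard facts. For an analytic function, $\log\varphi'\in BMOA(\Delta)$ if and only if its boundary values $\log|\varphi'|$ lie in $BMO(\mathbb{S})$ (conjugation preserves $BMO$), equivalently if and only if $|\varphi''/\varphi'|^{2}(1-|z|^{2})\,dxdy\in CM(\Delta)$. I would also note the conformally natural reformulation of the statement: a change of variables gives
\[
\int_{\Delta}|f|\,d(\varphi^{*}\mu)=\int_{\Omega}|g|\,d\mu,\qquad g=(f\circ\varphi^{-1})\,(\varphi^{-1})',
\]
and $f\mapsto g$ is an isometry of $H^{1}(\Delta)$ onto the conformally defined space $H^{1}(\Omega)=\{g:\ (g\circ\varphi)\varphi'\in H^{1}(\Delta)\}$. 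By Carleson's embedding theorem, boundedness of $\varphi^{*}:CM(\Omega)\to CM(\Delta)$ is therefore equivalent to: every measure satisfying the geometric condition \eqref{carlesonnorm} on $\Omega$ is a Carleson measure for $H^{1}(\Omega)$, with embedding constant controlled by $\|\mu\|_{*}$. In any case it suffices to estimate $\nu(Q_{I})$, $\nu=\varphi^{*}\mu$, over all Carleson boxes $Q_{I}\subset\Delta$ (equivalently over all $D(\zeta_{0},r)\cap\Delta$).

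For the implication $\log\varphi'\in BMOA\Rightarrow\varphi^{*}$ bounded, fix an arc $I$ and decompose $Q_{I}$ into Whitney squares $W$ with centres $z_{W}$. On each $W$, Koebe's theorem gives $|\varphi'|\asymp|\varphi'(z_{W})|$, a disc $D(w_{W},C|\varphi'(z_{W})|\ell(W))$ with $w_{W}\in\partial\Omega$ containing $\varphi(W)$, and $\mathrm{dist}(\varphi(W),\partial\Omega)\asymp|\varphi'(z_{W})|\ell(W)$; hence $\nu(W)\asymp|\varphi'(z_{W})|^{-1}\mu(\varphi(W))$. Bounding each $\mu(\varphi(W))$ separately by $\|\mu\|_{*}$ times the radius of its covering disc and then summing is too lossy (the resulting series of Whitney sidelengths diverges); instead I would sort the squares into layers according to the size of $|\varphi'(z_{W})|$ relative to $|\varphi'(z_{I})|$, so that within each layer the disjoint images $\varphi(W)$ fill a region of $\Omega$ whose $\mu$-mass is linear in a single scale, and then invoke John--Nirenberg: the set of $\zeta\in I$ on which $\log|\varphi'|$ deviates from its $I$-average by $\lambda$ has measure $\lesssim e^{-\alpha\lambda}|I|$. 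This exponential gain turns the layer sum into a convergent geometric series with total $O(|I|\|\mu\|_{*})$. Organizing the layers so that the distortion of $\varphi$ and the overlaps of the boundary discs of $\Omega$ are controlled simultaneously is the technical heart of this direction.

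For the converse I would argue contrapositively: if $\log|\varphi'|\notin BMO(\mathbb{S})$ there are arcs $I$ on which $\log|\varphi'|$ has arbitrarily large mean oscillation, and on the part of $Q_{I}$ where $|\varphi'|$ is abnormally small I would build, by pushing forward through $\varphi$ a Whitney-supported measure and normalizing, a measure $\mu$ on $\Omega$ with $\|\mu\|_{*}\lesssim 1$ but $\varphi^{*}\mu(Q_{I})/|I|$ comparable to the oscillation, contradicting boundedness. The most delicate point of the whole proof is verifying that this $\mu$ really obeys \eqref{carlesonnorm}: since $\Omega$ need not be a Jordan domain and $\partial\Omega$ need not be rectifiable, one must again use Koebe's theorem to dominate the $\varphi$-images of the relevant Whitney squares by boundary discs of $\Omega$ of comparable size and check that their multiplicities do not destroy the linear growth required in \eqref{carlesonnorm}.
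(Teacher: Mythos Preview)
The paper does not prove this theorem. Theorem~\ref{Z1} is stated in the Introduction as a result of \cite{Zi89} and is quoted only to provide context for the paper's own contributions (Theorems~\ref{Z2}, \ref{reflection1}, \ref{reflection2}); no argument for it appears anywhere in the text. There is therefore nothing in the paper to compare your proposal against.

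That said, the architecture you sketch --- Whitney decomposition of Carleson boxes, Koebe distortion to transfer geometry between $\Delta$ and $\Omega$, and John--Nirenberg to convert level-set information for $\log|\varphi'|$ into a convergent geometric series over layers --- is the approach of the original 1989 paper \cite{Zi89}, and your identification of the delicate points (overlap control of the image discs in the forward direction; verification of the geometric condition \eqref{carlesonnorm} for the constructed $\mu$ in the converse) is accurate. What you have written is an honest outline rather than a proof: you locate the work correctly but do not carry it out. If you intend to supply a full proof, the step that most needs to be made explicit is the layer-summation in the forward direction --- precisely how the images $\varphi(W)$ within a fixed layer are packed into a single boundary disc of $\Omega$ of controlled radius, so that a single application of \eqref{carlesonnorm} handles the whole layer --- since this is where a naive square-by-square estimate diverges, as you yourself note.
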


Let $\varphi$ be a univalent function in $\Delta$. We say that $\Omega = \varphi(\Delta)$ is a BMOA domain if $\log \varphi^{'} \in BMOA(\Delta)$. For the BMOA domain, Bishop and Jones \cite{BJ} gave a geometric characterization. It says that $\log \varphi^{'} \in BMOA(\Delta)$ if and only if the domain $\Omega = \varphi(\Delta)$ satisfies the following Bishop-Jones (BJ) condition:

 For any $z\in\Omega$ there exists a chord-arc domain  $\Omega_z\subset\Omega$ containing $z$ of ``norm"
 $\leqslant k(\Omega)$, whose diameter is uniformly comparable to $dist(z,\partial\Omega)$, and such that  $\Lambda^1(\partial\Omega\cap \partial\Omega_z)\geq c(\Omega)dist(z,\partial\Omega).$ 
 Here $k(\Omega)>1$ and $c(\Omega)>0$ depend only on $\Omega$, and $\Omega_z$  being a chord-arc domain means that its boundary is a chord-arc curve. 

A Jordan curve is a BJ curve if it is the boundary of a simply connected domain which satisfies the above BJ condition. 
A BJ curve which is a quasicircle is called a BJ quasicircle. A typical example is a variant of the snowflake where at each iteration step, one of sides of the triangle, for instance the left one, is left unchanged. 

Very recently, in \cite{WZ} we proved the analogous statement of Theorem \ref{Z1} for vanishing Carleson measures.
\begin{theorem}\label{Delta}
Let $\varphi$ be a conformal mapping from the unit disk $\Delta$ onto a simply connected domain $\Omega$.  Assume that  $\partial \Omega$ is  a BJ quasicircle. Then the pull-back operator
\begin{equation*}
\varphi^{*}: \; CM_0(\Omega) \to CM_0(\Delta)
\end{equation*}
is well-defined.
\end{theorem}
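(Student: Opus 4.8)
The plan is to bootstrap the vanishing statement out of the non-vanishing one. Since $\partial\Omega$ is a BJ quasicircle, the domain $\Omega=\varphi(\Delta)$ satisfies the Bishop--Jones condition, so $\log\varphi'\in BMOA(\Delta)$ by the Bishop--Jones theorem~\cite{BJ}, and Theorem~\ref{Z1} then provides a constant $C=C(\Omega)$ with $\|\varphi^{*}\sigma\|_{*}\le C\|\sigma\|_{*}$ for every $\sigma\in CM(\Omega)$. In particular, for the given $\mu\in CM_0(\Omega)\subset CM(\Omega)$ the measure $\nu:=\varphi^{*}\mu$ already belongs to $CM(\Delta)$, so it only remains to prove that
\begin{equation*}
\frac{\nu(S(I))}{|I|}\longrightarrow 0\quad\text{uniformly as }|I|\to 0,
\end{equation*}
where $S(I)=\{re^{i\theta}:e^{i\theta}\in I,\ 1-|I|<r<1\}$ denotes the Carleson box over an arc $I\subset\mathbb S$; indeed, membership in $CM_0(\Delta)$ is equivalent to this smallness, upon comparing the boxes $S(I)$ with the sets $D(z,r)\cap\Delta$.

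Fix a short arc $I\subset\mathbb S$ with midpoint $\zeta_I$, and set $w_I:=\varphi(\zeta_I)\in\partial\Omega$. Here is where the quasicircle hypothesis enters: since $\Omega$ is a quasidisk, $\varphi$ extends to a homeomorphism of $\overline\Delta$ onto $\overline\Omega$ that is H\"older continuous of some exponent $\alpha\in(0,1]$, with constant depending only on $\Omega$. Because $\zeta_I\in\overline{S(I)}$ and $\mathrm{diam}\,\overline{S(I)}\lesssim|I|$, this yields the uniform inclusion
\begin{equation*}
\varphi(S(I))\subseteq D\bigl(w_I,\varepsilon(|I|)\bigr)\cap\Omega,\qquad \varepsilon(t):=C_0\,t^{\alpha},
\end{equation*}
with $C_0=C_0(\Omega)$ and $\varepsilon(t)\to 0$ as $t\to 0$. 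I would then replace $\mu$ by its restriction $\mu_I:=\mu\big|_{D(w_I,\varepsilon(|I|))\cap\Omega}\in CM(\Omega)$: for every Borel $A\subseteq S(I)$ one has $\varphi(A)\subseteq\varphi(S(I))\subseteq D(w_I,\varepsilon(|I|))$, so the set functions $A\mapsto\mu(\varphi(A))$ and $A\mapsto\mu_I(\varphi(A))$ agree on subsets of $S(I)$, whence $\varphi^{*}\mu$ and $\varphi^{*}\mu_I$ coincide on $S(I)$. Applying Theorem~\ref{Z1} to $\mu_I$,
\begin{equation*}
\nu(S(I))=\varphi^{*}\mu_I(S(I))\lesssim\|\varphi^{*}\mu_I\|_{*}\,|I|\le C\,\|\mu_I\|_{*}\,|I|.
\end{equation*}

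The remaining point is that $\|\mu_I\|_{*}\to 0$ uniformly as $|I|\to 0$. Put $\eta(\delta):=\sup\{\mu(\Omega\cap D(z,s))/s:\ z\in\partial\Omega,\ 0<s\le\delta\}$; then $\eta(\delta)\to 0$ as $\delta\to 0$ because $\mu\in CM_0(\Omega)$. Given $z\in\partial\Omega$ and $0<r<\mathrm{diam}(\partial\Omega)$: if $r\le 2\varepsilon(|I|)$ then $\mu_I(\Omega\cap D(z,r))\le\mu(\Omega\cap D(z,r))\le\eta\bigl(2\varepsilon(|I|)\bigr)\,r$; if $r>2\varepsilon(|I|)$ then $\mu_I$ is supported in $\overline{D(w_I,\varepsilon(|I|))}$ and $w_I\in\partial\Omega$, so $\mu_I(\Omega\cap D(z,r))\le\mu_I(\Omega)=\mu\bigl(\Omega\cap D(w_I,\varepsilon(|I|))\bigr)\le\eta\bigl(\varepsilon(|I|)\bigr)\,\varepsilon(|I|)\le\eta\bigl(\varepsilon(|I|)\bigr)\,r$. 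Hence $\|\mu_I\|_{*}\le\eta\bigl(2\varepsilon(|I|)\bigr)$, and combining with the previous display,
\begin{equation*}
\frac{\nu(S(I))}{|I|}\lesssim\eta\bigl(2\varepsilon(|I|)\bigr)\longrightarrow 0\qquad\text{as }|I|\to 0,
\end{equation*}
uniformly in the position of $I$, which is exactly what was needed.

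The step I expect to be the crux is the uniform inclusion $\varphi(S(I))\subseteq D(w_I,\varepsilon(|I|))$ with $\varepsilon(|I|)\to 0$ \emph{independently of where $I$ sits on $\mathbb S$}: this is precisely what the quasicircle assumption buys, since for a general BMOA domain $\varphi$ need not even extend continuously to $\overline\Delta$, and without uniform boundary regularity the argument collapses. The other ingredients --- the identity $\varphi^{*}\mu=\varphi^{*}\mu_I$ on $S(I)$, the two-regime bound on $\|\mu_I\|_{*}$, and the reduction in the first paragraph --- are routine.
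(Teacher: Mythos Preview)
The paper does not actually prove this theorem; it is quoted from the companion paper~\cite{WZ}, so there is no in-paper argument against which to compare yours. That said, your strategy---bootstrap from the bounded pull-back of Theorem~\ref{Z1} by restricting $\mu$ to $D(w_I,\varepsilon(|I|))\cap\Omega$, then show $\|\mu_I\|_{*}\to 0$ via the two-regime estimate---is the natural one and is carried out correctly. The localization identity $\varphi^{*}\mu=\varphi^{*}\mu_I$ on $S(I)$ and the bound $\|\mu_I\|_{*}\le\eta(2\varepsilon(|I|))$ are both fine.

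One technical caveat: your H\"older step $\varphi(S(I))\subseteq D(w_I,C_0|I|^{\alpha})$ presumes $\partial\Omega$ is bounded. The paper allows unbounded curves (see the second half of the proof of Theorem~\ref{Z2}), and if $\infty\in\partial\Omega$ then $\varphi$ has a boundary pole and the Euclidean H\"older estimate fails uniformly near the preimage of $\infty$. This is not a serious gap---a M\"obius conjugation reduces to the bounded case, or one can argue in the spherical metric and use that $CM_0$ is a local condition---but as written your proof needs a sentence to dispose of it.
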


Recall that a locally rectifiable curve $\Gamma \subset \mathbb{C}$ is Ahlfors-regular if  there exists a constant $C > 0$ such that its arclength parametrization 
$z: I \to \mathbb{C}$  $(I; \;interval\; of\; \mathbb{R})$ satisfies 
\begin{equation}\label{def}
 \forall z \in \mathbb{C},\, \forall r > 0, \,\Lambda^{1}(\{s \in I;\; |z(s) - z| \leqslant r \}) \leqslant C r.
\end{equation}
The smallest such $C$ is called the Ahlfors-regular constant.  It has been shown by David  (see  \cite[p.162]{Po92})  that  $\Gamma$ is Ahlfors-regular if and only if 
\begin{equation}\label{def2}
\Lambda^{1}(\tau(\Gamma)) \leqslant C_1\, diam \tau(\Gamma)
 \end{equation}
for all $\tau \in $ M\"ob.

This concept was introduced by David \cite{Dav} in association with the problem of Cauchy integral. More precisely, This author demonstrated that Ahlfors-regular curves are the most general rectifiable curves for which for all $f \in L^2(\mathbb{R})$, 
\begin{equation*}
\lim_{\varepsilon \to 0}\int_{|z(y) - z(x)| > \varepsilon} \frac{f(y)dy}{z(x) - z(y)} = Tf(x)
\end{equation*}
exists almost everywhere and defines a bounded operator on $L^2(\mathbb{R})$.

Another result in \cite{Zi89} states that  $\partial \Omega$ being Ahlfors-regular is a sufficient condition to assure the boundedness of the push-forward operator $(\varphi^{-1})^{*}$ from $CM(\Delta)$ to $CM(\Omega)$. We will show that the converse is also true. Thus, the invariance of Carleson measures under push-forward operators characterizes Ahlfors-regular curves.  More precisely, 
\begin{theorem}\label{Z2}
Let $\varphi$ be a conformal mapping from the unit disk $\Delta$ onto a simply connected domain $\Omega$.  
Then  $\partial \Omega$ is Ahlfors-regular if and only if the push-forward operator $(\varphi^{-1})^{*}$ is well-defined  from $CM(\Delta)$ to $CM(\Omega)$.
\end{theorem}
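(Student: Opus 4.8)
We only have to prove the ``only if $\Leftarrow$'' part, since the implication ``$\partial\Omega$ Ahlfors-regular $\Rightarrow$ $(\varphi^{-1})^{*}$ bounded on Carleson measures'' is the result of Zinsmeister \cite{Zi89} quoted above. So assume $(\varphi^{-1})^{*}$ maps $CM(\Delta)$ into $CM(\Omega)$; we may and do take $\Omega$ bounded with $\partial\Omega$ a Jordan curve, which is the situation of interest here. Since $CM(\Delta)$ and $CM(\Omega)$ are Banach spaces and the graph of the linear map $(\varphi^{-1})^{*}$ is closed (Carleson-norm convergence forces convergence of the measures on every Carleson box, which pins down the limit), the closed graph theorem gives a constant $M$ with $\|(\varphi^{-1})^{*}\nu\|_{*}\le M\|\nu\|_{*}$ for all $\nu\in CM(\Delta)$.

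The plan is to feed $(\varphi^{-1})^{*}$ a family of model Carleson measures on $\Delta$ whose images converge weakly-$*$ to arclength measure on $\partial\Omega$. For $0<\varepsilon<1$ set $d\nu_{\varepsilon}=\varepsilon^{-1}\mathbf 1_{\{1-\varepsilon<|\zeta|<1\}}\,d\xi\,d\eta$. A direct estimate (treating $0<r\le\varepsilon$ and $r>\varepsilon$ separately) gives $\|\nu_{\varepsilon}\|_{*}\le C_{0}$ with $C_{0}$ absolute, so $\mu_{\varepsilon}:=(\varphi^{-1})^{*}\nu_{\varepsilon}$ satisfies $\|\mu_{\varepsilon}\|_{*}\le MC_{0}=:C$ uniformly in $\varepsilon$. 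Changing variables $w=\varphi(\zeta)$ and using $|(\varphi^{-1})'(\varphi(\zeta))|=|\varphi'(\zeta)|^{-1}$, one computes, for every $g\in C(\overline{\Omega})$,
\[
\int_{\Omega}g\,d\mu_{\varepsilon}=\frac{1}{\varepsilon}\int_{1-\varepsilon}^{1}\int_{0}^{2\pi}g(\varphi(\rho e^{i\theta}))\,|\varphi'(\rho e^{i\theta})|\,\rho\,d\theta\,d\rho .
\]

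First I would check that $\partial\Omega$ is rectifiable, i.e. $\varphi'\in H^{1}$. Taking $g\equiv1$ and writing $M(\rho)=\int_{0}^{2\pi}|\varphi'(\rho e^{i\theta})|\,d\theta$, which is non-decreasing in $\rho$ by subharmonicity of $|\varphi'|$, one gets $\mu_{\varepsilon}(\Omega)\ge\tfrac12 M(1-\varepsilon)$. On the other hand, fixing $z_{0}\in\partial\Omega$ we have $\Omega\subset D(z_{0},\mathrm{diam}\,\partial\Omega)$, whence $\mu_{\varepsilon}(\Omega)\le\sup_{0<r<\mathrm{diam}\,\partial\Omega}\mu_{\varepsilon}(\Omega\cap D(z_{0},r))\le C\,\mathrm{diam}\,\partial\Omega$. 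If $\varphi'\notin H^{1}$ then $M(1-\varepsilon)\to\infty$ as $\varepsilon\to0$, a contradiction; hence $\varphi'\in H^{1}$, $\partial\Omega$ is a rectifiable Jordan curve, $\varphi$ extends to a homeomorphism of $\overline{\Delta}$ onto $\overline{\Omega}$, and $d\Lambda^{1}$ on $\partial\Omega$ is the image under $\theta\mapsto\varphi(e^{i\theta})$ of $|\varphi'(e^{i\theta})|\,d\theta$ (see \cite{Po92}). Next I would show $\mu_{\varepsilon}\rightharpoonup\Lambda^{1}|_{\partial\Omega}$ weakly-$*$: since $\varphi'\in H^{1}$ we have $\varphi'(\rho\,\cdot)\to\varphi'(e^{i\cdot})$ in $L^{1}(d\theta)$ as $\rho\to1^{-}$, while $g\circ\varphi$ is uniformly continuous on $\overline{\Delta}$, and feeding these into the displayed formula gives $\int g\,d\mu_{\varepsilon}\to\int_{0}^{2\pi}g(\varphi(e^{i\theta}))|\varphi'(e^{i\theta})|\,d\theta=\int_{\partial\Omega}g\,d\Lambda^{1}$ for every $g\in C(\overline{\Omega})$. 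Now fix $z\in\partial\Omega$ and $0<r<\mathrm{diam}\,\partial\Omega$ and pick $g_{k}\in C(\overline{\Omega})$ with $0\le g_{k}\le1$, $g_{k}=0$ off $D(z,r)$, and $g_{k}\uparrow\mathbf 1_{D(z,r)}$; then $\int g_{k}\,d\mu_{\varepsilon}\le\mu_{\varepsilon}(\Omega\cap D(z,r))\le Cr$, and letting $\varepsilon\to0$ and then $k\to\infty$ (monotone convergence) yields $\Lambda^{1}(\partial\Omega\cap D(z,r))\le Cr$. A routine covering argument upgrades this to $\Lambda^{1}(\partial\Omega\cap D(z,r))\le C'r$ for all $z\in\mathbb C$ and all $r>0$ (for $r$ of order $\ge\mathrm{diam}\,\partial\Omega$ one covers $\partial\Omega$ by boundedly many disks of radius $\tfrac14\mathrm{diam}\,\partial\Omega$ centered on $\partial\Omega$), which is exactly \eqref{def}. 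Hence $\partial\Omega$ is Ahlfors-regular, completing the proof.

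The conceptual heart, and the step I expect to cause the most trouble, is transferring the uniform Carleson bound on the smooth regularizations $\mu_{\varepsilon}$ to the singular measure $\Lambda^{1}|_{\partial\Omega}$ concentrated on the boundary; this forces one through the $H^{1}$ boundary theory of $\varphi'$ (continuous extension of $\varphi$, $L^{1}$ convergence of $\varphi'(\rho\,\cdot)$, the arclength formula) and the weak-$*$ passage to the limit. Extracting a uniform operator bound from bare well-definedness via the closed graph theorem, and the concluding covering argument that turns the estimate at boundary centers and scales $<\mathrm{diam}\,\partial\Omega$ into \eqref{def}, are comparatively routine.
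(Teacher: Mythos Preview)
Your argument is correct in the bounded Jordan-domain case and is a pleasant variant of the paper's proof. Both proofs invoke the closed graph theorem to pass from well-definedness to a uniform operator bound, and both test this bound on a one-parameter family of Carleson measures on $\Delta$ that ``collapses'' onto $\mathbb{S}$. The paper uses the $1$-dimensional measures $\nu_n=\Lambda^1|_{\{|z|=1-2^{-n}\}}$, identifies their push-forwards as arclength on the level curves $\Gamma_n=\varphi(\gamma_n)$, reads off $\varphi'\in H^1$ from the uniform bound on $\Lambda^1(\Gamma_n)$, and then passes to the boundary by Fatou's lemma applied to $\theta\mapsto|\varphi'(r_ne^{i\theta})|\chi_{E_{r_n,\lambda}}$. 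You instead use the $2$-dimensional annular averages $\nu_\varepsilon$, obtain $\varphi'\in H^1$ from the monotonicity of the integral means $M(\rho)$, and pass to the limit via genuine weak-$*$ convergence $\mu_\varepsilon\rightharpoonup\Lambda^1|_{\partial\Omega}$ (using the $L^1(d\theta)$-convergence of $\varphi'(\rho\,\cdot)$). Your route is slightly more conceptual---it makes explicit that the Ahlfors-regularity constant is exactly the Carleson norm of $\Lambda^1|_{\partial\Omega}$, obtained as a weak-$*$ limit of uniformly bounded Carleson measures---while the paper's Fatou argument is a bit quicker and avoids the $L^1$ boundary convergence.

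There is, however, a genuine gap: you restrict at the outset to $\Omega$ bounded with $\partial\Omega$ a Jordan curve and do not justify this reduction. The paper treats the unbounded case separately, and its method there is not merely cosmetic: it first proves a lemma (Lemma~3 in the paper) that the push-forward operator sends Ahlfors-regular curves in $\Delta$ to Ahlfors-regular curves in $\Omega$ with uniform constants, then uses the M\"obius invariance \eqref{def2} of Ahlfors-regularity to transfer the problem via $w\mapsto 1/w$ to a bounded domain. Your annular-average argument does not adapt directly when $\partial\Omega$ is unbounded (the total-mass bound $\mu_\varepsilon(\Omega)\le C\,\mathrm{diam}\,\partial\Omega$ becomes vacuous), so you should either supply an inversion argument of this type or otherwise explain why the general case reduces to the bounded one. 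The Jordan-curve assumption is less serious: once $\varphi'\in H^1$ is established, $\varphi$ extends continuously to $\overline\Delta$ and your weak-$*$ argument only needs this continuous extension, not injectivity on $\mathbb{S}$, provided you interpret $\Lambda^1|_{\partial\Omega}$ as the push-forward of $|\varphi'(e^{i\theta})|\,d\theta$.
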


We will show as part of the proof of Theorem \ref{Z2} that the push-forward operator $(\varphi^{-1})^{*}$ being well-defined assures that the image of an Ahlfors-regular curve in $\Delta$ under a conformal mapping is also an Ahlfors-regular curve.  The corresponding result for the pull-back operator, the boundedness of the pull-back operator implies that the inverse mapping of a conformal mapping from the unit disk $\Delta$ onto a simply connected domain $\Omega$ preserves being Ahlfors-regular, is already known.

In \cite{WZ}, we showed the analogous statement of Theorem \ref{Z2} for vanishing Carleson measures is still valid.
\begin{theorem}\label{Omega}
Let $\varphi$ be a conformal mapping from the unit disk $\Delta$ onto a simply connected domain $\Omega$.  Assume that  $\partial \Omega$ is an Ahlfors-regular curve. Then the push-forward operator 
$$(\varphi^{-1})^{*}: \; CM_0(\Delta) \to CM_0(\Omega)$$ is well-defined.
\end{theorem}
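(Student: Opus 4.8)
The plan is to deduce Theorem~\ref{Omega} from the ``only if'' half of Theorem~\ref{Z2} --- namely that $\partial\Omega$ being Ahlfors-regular forces $(\varphi^{-1})^{*}$ to be a \emph{bounded} operator from $CM(\Delta)$ into $CM(\Omega)$, say $\|(\varphi^{-1})^{*}\sigma\|_{*}\leqslant C\|\sigma\|_{*}$ with $C=C(\Omega)$ --- together with a routine approximation argument resting on the fact that $CM_0(\Omega)$ is a norm-closed subspace of $CM(\Omega)$. As preliminaries I would record the change-of-variables identity $(\varphi^{-1})^{*}\nu(E)=\int_{\varphi^{-1}(E)}|\varphi^{'}(\zeta)|\,d\nu(\zeta)$ for Borel sets $E\subset\Omega$ (which also exhibits $(\varphi^{-1})^{*}$ as a positive linear operator on measures), the observation that every $\nu\in CM(\Delta)$ is a finite measure (let $r\uparrow 2$ in \eqref{carlesonnorm}), and the closedness of $CM_0(\Omega)$ in $CM(\Omega)$, which is a two-line $\varepsilon/2$ estimate.

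Given $\nu\in CM_0(\Delta)$, the next step is to split it, for each $\varepsilon\in(0,1)$, as $\nu=\nu_\varepsilon^{\,i}+\nu_\varepsilon^{\,b}$, where $\nu_\varepsilon^{\,i}$ and $\nu_\varepsilon^{\,b}$ are the restrictions of $\nu$ to $\{|\zeta|\leqslant 1-\varepsilon\}$ and to the collar $\{1-\varepsilon<|\zeta|<1\}$; both are Carleson measures, being dominated by $\nu$. The interior piece is handled at once: by the identity above, $(\varphi^{-1})^{*}\nu_\varepsilon^{\,i}$ is a finite measure supported on the compact set $K_\varepsilon=\varphi(\{|\zeta|\leqslant 1-\varepsilon\})\subset\Omega$, which lies at positive distance $d_\varepsilon$ from $\partial\Omega$; hence $(\varphi^{-1})^{*}\nu_\varepsilon^{\,i}\bigl(\Omega\cap D(w,\rho)\bigr)=0$ for all $w\in\partial\Omega$ and $0<\rho<d_\varepsilon$, so $(\varphi^{-1})^{*}\nu_\varepsilon^{\,i}\in CM_0(\Omega)$.

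The crux is to show that the collar piece is small in Carleson norm, namely $\|\nu_\varepsilon^{\,b}\|_{*}\leqslant C'\,\omega(\varepsilon)$ with $C'$ absolute, where $\omega(t):=\sup\{\nu(\Delta\cap D(\zeta_0,s))/s:\zeta_0\in\mathbb{S},\,0<s\leqslant t\}$ satisfies $\omega(t)\to 0$ as $t\to 0$ precisely because $\nu\in CM_0(\Delta)$. For a ball $D(\zeta_0,r)$ with $\zeta_0\in\mathbb{S}$ this is trivial when $r\leqslant\varepsilon$; when $r>\varepsilon$ one covers $\Delta\cap D(\zeta_0,r)\cap\{|\zeta|>1-\varepsilon\}$ by $O(r/\varepsilon)$ balls of radius $\varepsilon$ centered on $\mathbb{S}$ and sums, each contributing at most $\omega(\varepsilon)\,\varepsilon$. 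Combined with the boundedness of $(\varphi^{-1})^{*}$ this gives $\|(\varphi^{-1})^{*}\nu_\varepsilon^{\,b}\|_{*}\leqslant CC'\,\omega(\varepsilon)$. Finally, writing $(\varphi^{-1})^{*}\nu=(\varphi^{-1})^{*}\nu_\varepsilon^{\,i}+(\varphi^{-1})^{*}\nu_\varepsilon^{\,b}$, with the first summand in $CM_0(\Omega)$ and the second within $CC'\omega(\varepsilon)$ of $0$, and letting $\varepsilon\to0$, we conclude that $(\varphi^{-1})^{*}\nu$ lies in the closure of $CM_0(\Omega)$, hence in $CM_0(\Omega)$.

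I expect the collar estimate $\|\nu_\varepsilon^{\,b}\|_{*}\lesssim\omega(\varepsilon)$ to be the only point requiring genuine care: it is the sole place where the vanishing hypothesis on $\nu$ enters, and one must verify that the covering of the collar by $\varepsilon$-balls really has cardinality $O(r/\varepsilon)$ uniformly in $\zeta_0$ and in $r$. The remaining ingredients are bookkeeping, with one caveat worth stressing --- that Theorem~\ref{Z2} must be used in its quantitative form (a uniform bound $C=C(\Omega)$, depending only on the Ahlfors-regular constant of $\partial\Omega$), not merely as the assertion that the operator is well-defined; should one prefer, this quantitative bound can also be recovered a posteriori from the closed graph theorem.
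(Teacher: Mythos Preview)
The paper does not actually prove Theorem~\ref{Omega}: it is quoted from \cite{WZ} and no argument is given in the present paper, so there is no in-paper proof against which to compare your proposal.

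That said, your argument is correct. The decomposition into an interior piece (whose push-forward is a finite measure supported on the compact set $\varphi(\{|\zeta|\leqslant 1-\varepsilon\})\subset\Omega$, hence trivially in $CM_0(\Omega)$) and a collar piece (whose Carleson norm is controlled by the modulus $\omega(\varepsilon)$), combined with the boundedness of $(\varphi^{-1})^{*}$ coming from the implication $(1)\Rightarrow(2)$ of Theorem~\ref{Z2} together with Lemma~2, is exactly the natural route, and all steps go through as you describe. Two minor remarks. First, you only need boundedness of $(\varphi^{-1})^{*}$ for this single fixed $\varphi$, so a bound $C=C(\varphi)$ suffices; the stronger dependence ``only on the Ahlfors-regular constant'' that you mention parenthetically is not required for the argument. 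Second, your covering estimate for the collar is fine: radially projecting the region $\{1-\varepsilon<|\zeta|<1\}\cap D(\zeta_0,r)$ onto $\mathbb{S}$ lands in an arc of length $O(r)$, and spacing centers at distance $\varepsilon$ along that arc gives $O(r/\varepsilon)$ balls of radius $2\varepsilon$ covering the region, each contributing at most $2\omega(2\varepsilon)\varepsilon$.
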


The result in \cite{Zi84} states that an Ahlfors-regular curve must be a BJ curve, and the converse does not hold. A counterexample can be constructed as follows: Let 
$\Omega = \{(x, y) \in \mathbb{R}^2: y > sin(x^2)\}$. Then $\partial\Omega$ is not an Ahlfors-regular curve, but $\log \varphi^{'} \in BMOA(\mathbb{R}^2_{+})$, where $\varphi$ is a conformal mapping from $\mathbb{R}^2_{+}$ onto $\Omega$.

On the other hand, the BJ quasicircle might not even be rectifiable, though in a sense, it is rectifiable most of the time on all scales. According to the observation that a curve is both an Ahlfors-regular curve and a quasicircle, so called Ahlfors-regular quasicircle,  if and only if 
it is a chord-arc curve, it is clear that  an Ahlfors-regular quasicircle must be rectifiable. 

The paper is structured as follows: Section 2 is denoted to the proof of Theorem \ref{Z2}. As an application of Carleson measure theories stated in Section 1,  in Section 3, we will characterize chord-arc curves and asymptotically smooth curves in terms of quasiconformal  reflections.

\section{The boundedness of the push-forward operator}

Before proceeding to the proof of Theorem \ref{Z2}, let us recall its statement. 
\addtocounter{theorem}{-2}
\begin{theorem}
Let $\varphi$ be a conformal mapping from the unit disk $\Delta$ onto a simply connected domain $\Omega$.  
Then the following two statements are equivalent:
\begin{description}
\item[(1)]  $\partial \Omega$ is Ahlfors-regular; 
\item[(2)] The push-forward operator $(\varphi^{-1})^{*}$ is  well-defined  from $CM(\Delta)$ to $CM(\Omega)$.
\end{description}
\end{theorem}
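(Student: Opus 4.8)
The plan is to establish the two implications separately, relying on the arclength-image characterization \eqref{def2} of Ahlfors-regularity. For the direction (1)$\Rightarrow$(2), I would start from a Carleson measure $\nu$ on $\Delta$ with $\|\nu\|_*\le 1$ and a Carleson box $D(w,r)$ centered at $w\in\partial\Omega$. The goal is to bound $(\varphi^{-1})^*\nu\,(\Omega\cap D(w,r)) = \int_{\varphi^{-1}(\Omega\cap D(w,r))}|(\varphi^{-1})'\circ\varphi|^{-1}\,d\nu = \int_{\varphi^{-1}(\Omega\cap D(w,r))}|\varphi'|\,d\nu$ by a constant times $r$. The standard device is a Whitney/dyadic decomposition of $\Omega\cap D(w,r)$ into pieces on which $|\varphi'|$ is essentially constant (Koebe distortion), so that $\int|\varphi'|\,d\nu$ over each piece is controlled by $|\varphi'(\zeta_j)|$ times the $\nu$-mass of the corresponding region in $\Delta$, which in turn is controlled via $\|\nu\|_*$ by the diameter of that region, i.e.\ by $\mathrm{dist}(\zeta_j,\mathbb{S})|\varphi'(\zeta_j)|^{-1}\cdot$(arclength of the boundary arc). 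Summing, the total is comparable to the arclength of $\partial\Omega$ inside a bounded multiple of $D(w,r)$, and Ahlfors-regularity \eqref{def2} (or \eqref{def}) bounds this by $Cr$. This is essentially the argument from \cite{Zi89}, and I would cite it and sketch the key distortion estimates rather than redo everything.

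The substantive new direction is (2)$\Rightarrow$(1). Here I would argue by contradiction, or rather by extracting quantitative information from the hypothesis that $(\varphi^{-1})^*$ is bounded, say with operator norm $M$. The idea is to feed the operator a well-chosen Carleson measure on $\Delta$ — the natural candidate is arclength measure on a diameter or, better, $\nu = $ arclength on $\varphi^{-1}$ of a subarc, but since we do not yet know $\partial\Omega$ is rectifiable we should instead start in $\Delta$: take $\nu$ to be one-dimensional Hausdorff measure restricted to a hyperbolic geodesic, or to a radius $[0,e^{i\theta}]$; such $\nu$ lies in $CM(\Delta)$ with a universal norm. Then $(\varphi^{-1})^*\nu$ is, up to the Jacobian factor $|\varphi'|^{-1}\circ\varphi^{-1}$ built into the definition, a multiple of arclength on the image curve $\varphi([0,e^{i\theta}])$ — the factor $|(\varphi^{-1})'|^{-1} = |\varphi'|\circ\varphi^{-1}$ is exactly what converts the pushed-forward linear measure into genuine arclength on the image, so $(\varphi^{-1})^*\nu = \Lambda^1|_{\varphi([0,e^{i\theta}])}$. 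The Carleson condition on this image measure, applied over all balls, says precisely that $\varphi([0,e^{i\theta}])$ is an Ahlfors-regular curve with constant $\le M\|\nu\|_*$. Letting the geodesic tend to the boundary (or using a family of chords exhausting $\partial\Omega$) and invoking the fact that $\partial\Omega$ is a pointwise limit of such image curves, together with lower semicontinuity of Hausdorff measure / a compactness argument, yields that $\partial\Omega$ itself satisfies \eqref{def}, hence is Ahlfors-regular.

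The main obstacle, and the place where care is needed, is this last limiting step: \emph{a priori} we only know $(\varphi^{-1})^*$ is well-defined and bounded on $CM(\Delta)$, and we must transfer the resulting uniform Ahlfors-regularity of approximating curves $\varphi(\{|z|=\rho\})$ (as $\rho\uparrow1$) to $\partial\Omega$, which requires knowing $\partial\Omega$ is rectifiable in the first place and that the arclength measures converge appropriately. I would handle this by first proving a \emph{uniform} Ahlfors-regularity bound for the curves $\gamma_\rho=\varphi(\{|z|=\rho\})$ — feeding the measures $\nu_\rho = \Lambda^1|_{\{|z|=\rho\}}$, which have Carleson norms bounded independently of $\rho$ — obtaining $\Lambda^1(\gamma_\rho\cap D(w,r))\le Mr$ with $M$ independent of $\rho$; in particular $\mathrm{length}(\gamma_\rho)\le M\,\mathrm{diam}\,\Omega$ uniformly, so $\partial\Omega$ is rectifiable, and then a standard argument (e.g.\ via the fact that $\varphi$ extends to a homeomorphism of closures when $\partial\Omega$ is rectifiable, plus Fatou-type convergence of $\varphi'$ radially) promotes the uniform bound to the boundary curve. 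The remaining step, passing from the chord/geodesic version back to the genuine M\"obius-invariant form \eqref{def2}, is routine once rectifiability is in hand. I expect the bookkeeping in the Whitney decomposition for (1)$\Rightarrow$(2) and the convergence of $\gamma_\rho$ to $\partial\Omega$ for (2)$\Rightarrow$(1) to be the two technical hot spots.
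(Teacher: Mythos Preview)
Your plan for $(2)\Rightarrow(1)$ is essentially the paper's: push forward arclength on the circles $\{|z|=\rho\}$, read off from the Carleson bound on $\Omega$ a uniform estimate $\Lambda^1(\Gamma_\rho\cap D(w,r))\le Mr$ for $w\in\partial\Omega$, deduce $\varphi'\in H^1$ and hence rectifiability of $\partial\Omega$, and then pass to the limit by Fatou. Two points, however, are glossed over and the second is a genuine gap.

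First, statement (2) asserts only that $(\varphi^{-1})^*$ is \emph{well-defined}, not that it is bounded; you silently assume an operator norm $M$. The paper inserts a short closed-graph argument (its Lemma~2) to obtain boundedness before anything quantitative can be said.

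Second, and more seriously, your rectifiability step ``$\mathrm{length}(\gamma_\rho)\le M\,\mathrm{diam}\,\Omega$ uniformly, so $\partial\Omega$ is rectifiable'' is vacuous when $\partial\Omega$ is unbounded, and your proposal offers no substitute. The paper handles this case by a detour: it first proves (its Lemma~3, using Koebe distortion for balls contained \emph{inside} $\Omega$, not merely those centered on $\partial\Omega$) that each image circle $\Gamma_\rho$ is Ahlfors-regular with a constant independent of $\rho$ --- note that the Carleson condition on $\Omega$ by itself only controls $\Lambda^1(\Gamma_\rho\cap D(w,r))$ for $w\in\partial\Omega$, which is strictly weaker. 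It then applies the inversion $w\mapsto 1/w$: the M\"obius-invariant characterization \eqref{def2} carries the uniform Ahlfors-regularity over to the inverted curves, which now lie in a bounded domain with uniformly bounded lengths, and the bounded-case argument runs for $1/\varphi(1/z)$. Without this Koebe step plus inversion, the limiting argument you sketch does not cover the unbounded case.
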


The proof of $(1) \Rightarrow (2)$ in Theorem 3 has been shown in \cite{Zi89}. 
 The strategy for the proof of $(2) \Rightarrow (1)$ is to use the fact that if the curve $\gamma \subset \Delta$ is Ahlfors-regular, then $\Lambda^{1}|_{\gamma}$ is a Carleson measure on the unit disk $\Delta$. 

We begin by stating  Koebe distortion theorem, which will be needed later. See \cite[Theorem 1.3]{Po92} for example.
\begin{lemma}
A conformal homeomorphism $f$ of $\Delta$ into $\mathbb{C}$ satisfies
\begin{equation*}
\begin{split}
& |f^{'}(0)|\frac{|z|}{(1 + |z|)^2} \leqslant |f(z) - f(0)| \leqslant |f^{'}(0)|\frac{|z|}{(1 - |z|)^2}; \\
&|f^{'}(0)|\frac{1 - |z|}{(1 + |z|)^3} \leqslant |f^{'}(z)| \leqslant |f^{'}(0)|\frac{1 + |z|}{(1 - |z|)^3}\\
\end{split}
\end{equation*}
for every $z \in \Delta$. The first inequality of the first line  shows in particular that the image $f(\Delta)$ contains a disk of center at $f(0)$ and radius $|f^{'}(0)|/4$.
\end{lemma}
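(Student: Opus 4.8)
The plan is to reduce to a normalized situation and then run the classical argument founded on the area theorem. Every one of the four inequalities is homogeneous under the affine substitution $f \mapsto g := (f - f(0))/f'(0)$, each side merely picking up a factor $|f'(0)|$; hence it suffices to prove them for a univalent $g$ on $\Delta$ normalized by $g(0) = 0$ and $g'(0) = 1$, with Taylor expansion $g(z) = z + a_2 z^2 + a_3 z^3 + \cdots$. I write $S$ for the class of such functions and $\Sigma$ for the univalent functions on $\{|w| > 1\}$ of the form $h(w) = w + b_0 + b_1 w^{-1} + \cdots$.

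The analytic core, which I expect to be the main obstacle, is the Bieberbach bound $|a_2| \le 2$. I would derive it from the area theorem: for $h \in \Sigma$, Green's theorem applied to the bounded complement of $h(\{|w| > 1\})$ shows that its area equals $\pi\big(1 - \sum_{n \ge 1} n |b_n|^2\big) \ge 0$, whence $\sum_{n \ge 1} n|b_n|^2 \le 1$ and in particular $|b_1| \le 1$. To bring this to bear on $a_2$ I pass through the odd square-root transform: $\psi(z) := \sqrt{g(z^2)} = z + \tfrac12 a_2 z^3 + \cdots$ is a single-valued odd element of $S$ --- single-valuedness of the branch uses $g(w) \ne 0$ for $w \ne 0$, and univalence follows from that of $g$ together with the oddness of $\psi$ --- and its inversion $h(w) := 1/\psi(1/w) = w - \tfrac12 a_2 w^{-1} + \cdots$ belongs to $\Sigma$, so $|a_2|/2 = |b_1| \le 1$. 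The delicate points here are the single-valuedness of the square-root branch, the univalence of $\psi$, and the nonnegativity-of-area step; once $|a_2| \le 2$ is secured, the rest is integration.

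Next I would upgrade this single-coefficient estimate to a pointwise inequality at an arbitrary point by pre-composing with disk automorphisms. For fixed $z_0 \in \Delta$ set $\phi_{z_0}(w) = (w + z_0)/(1 + \bar z_0 w)$ and
$$F(w) = \frac{g(\phi_{z_0}(w)) - g(z_0)}{(1 - |z_0|^2)\, g'(z_0)},$$
which again lies in $S$; a direct computation gives its second coefficient $a_2(F) = \tfrac12\big[(1 - |z_0|^2) g''(z_0)/g'(z_0) - 2\bar z_0\big]$. Inserting $|a_2(F)| \le 2$ and writing $z_0 = re^{i\theta}$, then taking real parts and using the identity $r\,\partial_r \log|g'(re^{i\theta})| = \mathrm{Re}\big(z g''(z)/g'(z)\big)$, yields the differential inequality
$$\frac{2r^2 - 4r}{1 - r^2} \;\le\; r\,\frac{\partial}{\partial r}\log|g'(re^{i\theta})| \;\le\; \frac{2r^2 + 4r}{1 - r^2}.$$
Integrating in $r$ from $0$ to $|z|$ (using $g'(0) = 1$) produces exactly the two derivative bounds of the second line, and integrating the upper derivative bound $|g'(te^{i\theta})| \le (1+t)/(1-t)^3$ along the radius yields the upper growth bound $|g(z)| \le |z|/(1-|z|)^2$.

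For the lower growth bound I would first record the Koebe one-quarter theorem as a second consequence of Bieberbach: if $w_0 \notin g(\Delta)$ then $w_0 g/(w_0 - g) \in S$ has second coefficient $a_2 + 1/w_0$, so $|a_2 + 1/w_0| \le 2$ and hence $|1/w_0| \le 4$, i.e. $D(f(0), |f'(0)|/4) \subset f(\Delta)$ after undoing the normalization. Then, for $z$ with $|g(z)| \le 1/4$, the straight segment from $0$ to $g(z)$ lies in $g(\Delta)$ and its preimage $\sigma$ is a path from $0$ to $z$ along which $|g(z)| = \int_\sigma |g'(\zeta)|\,|d\zeta| \ge \int_0^{|z|} (1 - s)/(1 + s)^3\, ds = |z|/(1 + |z|)^2$, while if $|g(z)| > 1/4$ the bound is automatic since $|z|/(1 + |z|)^2 \le 1/4$ always. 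This establishes the lower growth bound, and the one-quarter statement of the lemma is then also visible directly as the limit of $|z|/(1 + |z|)^2 \to 1/4$ as $|z| \to 1$. Undoing the normalization throughout multiplies each estimate by $|f'(0)|$ and returns the stated inequalities for $f$.
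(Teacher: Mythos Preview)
Your argument is the classical textbook proof and is correct. The paper itself provides no proof of this lemma at all: it is simply stated with a reference to Pommerenke's book \cite[Theorem~1.3]{Po92}, so there is nothing to compare your approach against. One small remark on presentation: in the lower growth bound, the step $\int_\sigma |g'(\zeta)|\,|d\zeta| \ge \int_0^{|z|} (1-s)/(1+s)^3\,ds$ silently uses that the integrand is decreasing in $|\zeta|$ together with the fact that the radial coordinate along $\sigma$ is $1$-Lipschitz in arclength; this is standard, but a sentence making it explicit would remove any doubt.
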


\begin{lemma}
Let $\varphi$ be a conformal mapping from the unit disk $\Delta$ onto a simply connected domain $\Omega$. Assume that the push-forward operator $(\varphi^{-1})^{*}$ is  well-defined  from $CM(\Delta)$ to $CM(\Omega)$. 
Then $(\varphi^{-1})^{*}$ is a bounded linear operator.
\end{lemma}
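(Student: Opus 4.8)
The statement is an instance of automatic continuity, so the plan is to apply the closed graph theorem. Both $CM(\Delta)$ and $CM(\Omega)$ are Banach spaces under their Carleson norms, as recalled in Section~1, and by hypothesis $T := (\varphi^{-1})^{*}$ is a well-defined map of $CM(\Delta)$ into $CM(\Omega)$. Linearity of $T$ is immediate from the definition $(\varphi^{-1})^{*}d\nu = |(\varphi^{-1})'|^{-1}d(\nu\circ\varphi^{-1})$: the push-forward $\nu\mapsto\nu\circ\varphi^{-1}$ is linear in $\nu$, and multiplication by the fixed weight $|(\varphi^{-1})'|^{-1}$ preserves linearity. Hence it suffices to prove that the graph of $T$ is closed.

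The first auxiliary observation I would record is that convergence in Carleson norm forces convergence of total variation on compact subsets: if $K\subset\Delta$ is compact, then $K$ is covered by finitely many Carleson boxes $\Delta\cap D(z_j,r_j)$ with $z_j\in\mathbb{S}$, so $|\sigma|(K)\leqslant C_K\|\sigma\|_{*}$ for every $\sigma\in CM(\Delta)$, and symmetrically on $\Omega$. Consequently, if $\nu_n\to\nu$ in $CM(\Delta)$ then $\int_\Delta f\,d\nu_n\to\int_\Delta f\,d\nu$ for every $f\in C_c(\Delta)$, and likewise a sequence converging in $CM(\Omega)$ converges weakly against every $g\in C_c(\Omega)$.

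Now suppose $\nu_n\to\nu$ in $CM(\Delta)$ and $T\nu_n\to\mu$ in $CM(\Omega)$; I must show $\mu=T\nu$. Fix $g\in C_c(\Omega)$. Since $(\varphi^{-1})'(\varphi(\zeta))=1/\varphi'(\zeta)$, the change of variables $z=\varphi(\zeta)$ in the push-forward measure gives
\[
\int_\Omega g\,d(T\nu_n)=\int_\Omega g\,|(\varphi^{-1})'|^{-1}\,d(\nu_n\circ\varphi^{-1})=\int_\Delta g(\varphi(\zeta))\,|\varphi'(\zeta)|\,d\nu_n(\zeta).
\]
Because $\varphi$ is a homeomorphism of $\Delta$ onto $\Omega$, the function $\zeta\mapsto g(\varphi(\zeta))|\varphi'(\zeta)|$ is continuous with compact support in $\Delta$, so by the second paragraph the right-hand side tends to $\int_\Delta g(\varphi)|\varphi'|\,d\nu=\int_\Omega g\,d(T\nu)$; the left-hand side tends to $\int_\Omega g\,d\mu$ for the same reason. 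Thus $\int_\Omega g\,d\mu=\int_\Omega g\,d(T\nu)$ for all $g\in C_c(\Omega)$, and since both are Radon measures on the open set $\Omega$ they coincide. The graph of $T$ is therefore closed, and the closed graph theorem yields that $T$ is bounded.

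The only genuinely delicate point is the measure-theoretic bookkeeping in the displayed identity — justifying the change of variables for the push-forward measure and making sure the limit really passes against compactly supported test functions — but this is routine once the correct class of test functions is singled out; there is no hidden analytic difficulty, which is exactly why the Ahlfors-regularity of $\partial\Omega$ plays no role in this lemma.
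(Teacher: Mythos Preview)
Your proof is correct and follows essentially the same approach as the paper: both apply the closed graph theorem, identify convergence in Carleson norm with weak convergence against $C_c$ test functions, and use the change-of-variables identity $\int g\,d(T\nu_n)=\int g\circ\varphi\,|\varphi'|\,d\nu_n$ to pass to the limit. Your version is in fact more explicit than the paper's, supplying the compact-covering argument for the implication ``Carleson-norm convergence $\Rightarrow$ weak convergence on $C_c$'' that the paper leaves implicit.
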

\begin{proof}
Let
$$T=(\varphi^{-1})^{*}: \; CM(\Delta) \to CM(\Omega)$$
which sends by hypothesis $\nu$ to $\mu$ defined by $d\mu = |(\varphi^{-1})^{'}|^{-1} d\nu\circ\varphi^{-1}$.  We define the graph of $T$ to be the set 
$$\Gamma(T) = \{(\nu, \mu) \in CM(\Delta) \times CM(\Omega); \, T(d\nu) = d\mu\}.$$
Let $\{(\nu_n, \mu_n)\} \subset \Gamma(T)$ such that $\nu_n \to \nu,\,\mu_n\to\mu$ as $n \to \infty$. If we can show that $d\mu = T(d\nu)$ then, by the closed graph theorem the conclusion of the lemma holds. 

The assumptions imply that $(\mu_n)$ converges weakly to $\mu$ in the sense that
$$\int fd\mu_n\to \int fd\mu$$
for every $f$ continuous with compact support in $\Omega$. On the other hand,
$$ \int f d\mu_n=\int f\circ\varphi\vert \varphi^{'}\vert d\nu_n\to \int f\circ\varphi\vert \varphi^{'} \vert d\nu=\int f T(d\nu),$$
so that $d\mu=T(d\nu)$ and the lemma follows. 
\end{proof}
\begin{lemma}
Let $\varphi$ be a conformal mapping from the unit disk $\Delta$ onto a simply connected domain $\Omega$. Assume that the push-forward operator $(\varphi^{-1})^{*}$ is  well-defined  from $CM(\Delta)$ to $CM(\Omega)$. If $\gamma \subset \Delta$ is Ahlfors-regular, then $\Gamma = \varphi(\gamma) \subset \Omega$ is Ahlfors-regular.
\end{lemma}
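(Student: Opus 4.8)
The plan is to transport the Ahlfors-regularity of $\gamma$ through $\varphi$ by a change-of-variables argument on measures: realize the measure attached to the arclength parametrization of $\gamma$ as a Carleson measure on $\Delta$, push it forward by $(\varphi^{-1})^{*}$, observe that the result is exactly the analogous measure attached to $\Gamma$, and then convert the Carleson condition on $\Omega$ into the Ahlfors-regularity of $\Gamma$, invoking the Koebe distortion theorem precisely on the scales where the Carleson condition is empty.

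First I would parametrize $\gamma$ by arclength, $z:I\to\Delta$, and let $\nu$ be the image under $z$ of Lebesgue measure on $I$; then $\nu$ is carried by $\gamma\subset\Delta$ and, for $z\in\mathbb C$ and $r>0$, $\nu(D(z,r))=\Lambda^{1}(\{s\in I:\,|z(s)-z|\leqslant r\})$, so that \eqref{def} for $\gamma$ is precisely the assertion $\nu\in CM(\Delta)$ with $\|\nu\|_{*}\leqslant C_\gamma$, $C_\gamma$ the Ahlfors-regular constant of $\gamma$. By the preceding lemma $(\varphi^{-1})^{*}$ is bounded, so $\mu:=(\varphi^{-1})^{*}\nu\in CM(\Omega)$ with $\|\mu\|_{*}\leqslant M\,C_\gamma$, $M$ the operator norm. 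Unwinding the definitions of $(\varphi^{-1})^{*}$ and of $\nu$ gives, for every Borel $E\subset\Omega$,
\[
\mu(E)=\int_{\varphi^{-1}(E)}|\varphi'(\zeta)|\,d\nu(\zeta)=\int_{\{s\in I:\,z(s)\in\varphi^{-1}(E)\}}|\varphi'(z(s))|\,ds ,
\]
and since $s\mapsto\varphi(z(s))$ parametrizes $\Gamma=\varphi(\gamma)$ with speed $|\varphi'(z(s))|$, this says that $\mu$ is exactly the measure attached to the arclength parametrization of $\Gamma$. In particular $\mu$ is locally finite, so $\Gamma$ is locally rectifiable, and \eqref{def} for $\Gamma$ reduces to the inequality $\mu(D(w,r))\leqslant C\,r$ for all $w\in\mathbb C$, $r>0$.

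It remains to prove $\mu(D(w,r))\lesssim r$. Since $\mu$ is carried by $\Gamma$, after enlarging $r$ by a bounded factor we may assume $w\in\Gamma$, say $w=\varphi(\zeta)$ with $\zeta\in\gamma$; write $d:=dist(w,\partial\Omega)$ and fix a small absolute constant $c_0\in(0,1)$. If $r>c_0 d$, choose $w'\in\partial\Omega$ with $|w-w'|=d$; then $\Gamma\cap D(w,r)\subset\Omega\cap D(w',(1+1/c_0)r)$, whence $\mu(D(w,r))\leqslant(1+1/c_0)M C_\gamma\,r$ as long as $(1+1/c_0)r<diam(\partial\Omega)$, the remaining larger scales being routine given $\mu(\Omega)<\infty$ and $diam\,\Gamma\leqslant diam(\partial\Omega)$. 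This is the only step that uses the hypothesis on $(\varphi^{-1})^{*}$. If instead $r\leqslant c_0 d$, then $D(w,r)\subset\Omega$ and the Carleson bound is vacuous; here, using $d\asymp(1-|\zeta|)|\varphi'(\zeta)|$ and the Koebe distortion theorem around $\zeta$, one checks that for $c_0$ small enough $\varphi^{-1}(D(w,r))\subset D\big(\zeta,\,A r/|\varphi'(\zeta)|\big)$ and that $|\varphi'|$ is comparable to $|\varphi'(\zeta)|$ on that subdisk. Then, by the formula for $\mu$ and then $\nu\in CM(\Delta)$ applied on $D(\zeta,A r/|\varphi'(\zeta)|)$,
\[
\mu(D(w,r))\leqslant\Big(\sup_{\varphi^{-1}(D(w,r))}|\varphi'|\Big)\,\nu\big(D(\zeta,\,A r/|\varphi'(\zeta)|)\big)\lesssim|\varphi'(\zeta)|\cdot C_\gamma\,\frac{A r}{|\varphi'(\zeta)|}\lesssim A\,C_\gamma\,r .
\]
Combining the two ranges of $r$ gives $\mu(D(w,r))\lesssim r$ for all $w,r$, which is the Ahlfors-regularity of $\Gamma$.

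I expect the main obstacle to be this last step, and within it the interior scales $r\leqslant c_0 d$: near a point $w\in\Gamma$ the Carleson condition on $\Omega$ controls only balls of radius $\gtrsim dist(w,\partial\Omega)$, so on smaller balls it says nothing and the regularity of $\Gamma$ there must be produced by carrying the regularity of $\gamma$ through $\varphi$ directly — which is exactly what the Koebe distortion theorem makes possible (and why it was stated). The one genuinely technical ingredient is the inclusion $\varphi^{-1}(D(w,r))\subset D(\zeta,A r/|\varphi'(\zeta)|)$, most transparently obtained from the comparison between Euclidean and hyperbolic balls in $\Delta$ and $\Omega$; it is entirely standard.
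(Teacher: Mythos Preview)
Your proof is correct and follows essentially the same approach as the paper: identify the push-forward of $\Lambda^{1}|_{\gamma}$ with $\Lambda^{1}|_{\Gamma}$, use the Carleson bound on $\Omega$ for scales $r\gtrsim dist(w,\partial\Omega)$, and Koebe distortion for the interior scales. The only cosmetic difference is that the paper applies Koebe directly to $\phi=\varphi^{-1}$ on $D(w_0,d)\subset\Omega$, obtaining $\phi(D(w_0,r))\subset D(\zeta,4r|\phi'(w_0)|)$ and the derivative bounds with explicit constants at $c_0=\tfrac12$, whereas you phrase the same step via $\varphi$ around $\zeta$ with an unspecified small $c_0$.
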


\begin{proof}
Suppose that the curve $\gamma \subset \Delta$ is Ahlfors-regular.  We define $\nu = \Lambda^{1}|_{\gamma}$ and denote by $\mu$ the push-forward of the measure $\nu$. 
By (\ref{def}), there exists some constant $C > 0$ such that for any $z \in \mathbb{C}$ and $r > 0$, we have 
\begin{equation}\label{Ar}
\nu (D(z, r)\cap \gamma) = \Lambda^{1}(D(z, r)\cap \gamma) \leqslant C r.
\end{equation} 
In particular, (\ref{Ar}) holds for any $z \in \mathbb{S}$ and $0 < r < 2$, 
which implies $\nu \in CM(\Delta)$. For any domain $A \subset \Omega$, 
\begin{equation*}
\begin{split}
\mu(A) & =  \int_{A}d\mu = \int_{A}\frac{1}{|(\varphi^{-1})^{'}|}d \nu \circ \varphi^{-1} \\
&= \int_{\varphi^{-1}(A)}\frac{1}{|(\varphi^{-1})^{'}|(\varphi(z))}d\nu = \int_{\varphi^{-1}(A)}|\varphi^{'}(z)|d\nu\\
& = \int_{\varphi^{-1}(A) \cap \gamma}|\varphi^{'}(z)| |dz| = \Lambda^{1}(\varphi(\gamma) \cap A) = \Lambda^{1}(\Gamma \cap A).\\
\end{split}
\end{equation*}
Thus, $\mu = \Lambda^1|_{\Gamma} \in CM(\Omega)$.

Let $w_0 \in \Gamma$:  Suppose $r \geqslant \frac{1}{2}dist(w_0, \partial\Omega)$. Then $r + dist(w_0, \partial\Omega) \leqslant 3r.$ Set 
$dist(w_0, \partial\Omega) = |w_0 - \tilde{w_0}|, \; \tilde{w_0} \in \partial\Omega$. It is easy to see that 
\begin{equation}
\begin{split}
\Lambda^{1}( D(w_0, r) \cap \Gamma) &= \mu(D(w_0, r) \cap \Gamma)\\  
&\leqslant \mu(D(\tilde{w_0}, 3r) \cap \Gamma) \\
&\leqslant 3\|\mu\|_{*} r \leqslant 3C\|T\|r.\\
\end{split}
\end{equation}
The last inequality follows from (\ref{Ar}) and the boundedness of the push-forward operator  $T = (\varphi^{-1})^{*}$, where $\|T\|$ denotes the operator norm.

Suppose $r < \frac{1}{2}dist(w_0, \partial\Omega)$. 
Set $d = dist(w_0, \partial\Omega)$. We have $D(w_0, r) \subset D(w_0, d) \subset \Omega.$ Set $\phi = \varphi^{-1}$. 
Then $\phi(dw + w_0)$ is a conformal homeomorphism of $\Delta$ into $\Delta$.  By Koebe distortion theorem, for any $w \in D(w_0, r)$, we have 
\begin{equation}\label{Koebe1}
\frac{4}{27}|\phi^{'}(w_0)| \leqslant |\phi^{'}(w)| \leqslant 12|\phi^{'}(w_0)|
\end{equation}
and 
\begin{equation}\label{Koebe2}
|\phi(w_0 + re^{i\theta}) - \phi(w_0)| \leqslant d|\phi^{'}(w_0)|4\frac{r}{d} = 4r|\phi^{'}(w_0)|.
\end{equation}
Then, (\ref{Koebe2}) implies that
\begin{equation}\label{4r}
\phi(D(w_0, r)) \subset D(\phi(w_0), 4r|\phi^{'}(w_0)|).
\end{equation}
We conclude by (\ref{Ar}) and (\ref{4r}) that, 
\begin{equation}\label{1}
\Lambda^{1}(\phi(D(w_0, r) \cap \Gamma)) \leqslant 4C r |\phi^{'}(w_0)|.
\end{equation}
On the other hand, it follows from (\ref{Koebe1}) that 
\begin{equation}\label{2}
\begin{split}
\Lambda^{1}(\phi(D(w_0, r) \cap \Gamma)) & = \int_{D(w_0, r) \cap \Gamma}|\phi^{'}(w)| |dw|\\ & \geqslant \frac{4}{27}|\phi^{'}(w_0)| \Lambda^{1}(D(w_0, r) \cap \Gamma).\\
\end{split}
\end{equation}
Then, by (\ref{1}) and (\ref{2}) we have 
$\Lambda^{1}(D(w_0, r) \cap \Gamma) \leqslant 27C r.$

Let $w_0 \in \mathbb{C}\setminus \Gamma$: if $r \leqslant dist(w_0, \Gamma)$, there is nothing to prove. 

If $r > dist(w_0, \Gamma)$, let $dist(w_0, \Gamma) = |w_0 - \tilde{w}|, \, \tilde{w} \in \Gamma.$ Then,
$$\Lambda^{1}(D(w_0, r) \cap \Gamma) \leqslant \Lambda^1 (D(\tilde{w}, 2r) \cap \Gamma) \leqslant 2max\{3C \|T\|, 27C\}r.$$
Therefore, we conclude that  
$\Lambda^1(D(w, r) \cap \Gamma) \leqslant 2max\{3C \|T\|, 27C\}r$
for any $w \in \mathbb{C}$ and $r \in (0, \infty)$. This completes the proof of Lemma 3. 
\end{proof}

\begin{remark}
For the ease of calculation, the Carleson measure on $\Delta$ can be defined in the following equivalent way. A positive measure $\mu$ on $\Delta$ is a Carleson measure if there is a constant $C$ such that 
\begin{equation}
\mu(S) \leqslant Ch
\end{equation}
for every sector 
\begin{equation*}
S = \{re^{i\theta}: 1 - h \leqslant r < 1, |\theta - \theta_0| \leqslant h\}.
\end{equation*}
The smallest such $C$ is called the Carleson norm of $\mu$, $\|\mu\|_{C}$. For the sake of  simplicity, the norm $\|\cdot\|_{*}$ in (\ref{carlesonnorm}) and the norm $\|\cdot\|_{C}$ will be identified in the following arguments. 
\end{remark}

We are ready now to the proof of $(2) \Rightarrow (1)$ in Theorem  \ref{Z2}.
\begin{proof}[proof of $(2) \Rightarrow (1)$ in  Theorem \ref{Z2}] We first suppose $\partial \Omega$ is bounded.  
Set $\gamma_n = \{z: \; |z| = r_n = 1 - 2^{-n}\}, \; n = 1, 2, 3, \cdots$ and $\varphi(\gamma_n) = \Gamma_n$. We define $\nu_n = \Lambda^1|_{\gamma_n}$. By direct  computation we see $\|\nu_n\|_{*} \leqslant 1$ for any $n$. Then $d\mu_n = (\varphi^{-1})^{*}d\nu_n = d\Lambda^1|_{\Gamma_n}$ follows from the proof of Lemma 3. We conclude  by Lemma 2 that $\|\mu_n\|_{*} \leqslant \|T\|$ for any $n$, where $\|T\|$ denotes  the operator norm of $T = (\varphi^{-1})^{*}$ as before.

For any $w \in \partial\Omega$,
\begin{equation*} 
\begin{split}
 \Lambda^{1}(\Gamma_n) & =\mu_n(D(w, diam(\partial\Omega)) \cap \Gamma_n)\\
&= \int_{|z| = r_n}|\varphi^{'}(z)||dz| = \int_0^{2\pi}|\varphi^{'}(r_ne^{i\theta})|r_n d\theta \\
& \leqslant diam(\partial\Omega)\|T\|\\
\end{split}
\end{equation*}
which implies that 
$$\int_0^{2\pi}|\varphi^{'}(r_ne^{i\theta})|d\theta \leqslant 2diam(\partial\Omega)\|T\|.$$
Thus, $\varphi^{'} \in H^{1}$ and $\partial\Omega$ is rectifiable.

To prove the curve $\partial\Omega$ is Ahlfors-regular, it is sufficient to show that  $\Lambda^{1}(D(w, \lambda) \cap \partial\Omega) \leqslant \|T\|\lambda$ for 
any $w \in \partial\Omega$ and $0 < \lambda < \infty$.  Since 
$$\Lambda^{1}(D(w, \lambda) \cap \Gamma_n) = \int_0^{2\pi}|\varphi^{'}(r_n e^{i\theta})|r_n \chi_{E_{r_n, \lambda}} d\theta \leqslant \|T\|\lambda,$$
where $E_{r_n, \lambda} = \{\theta \in (0, 2\pi]; \, \varphi(r_ne^{i\theta}) \in D(w, \lambda)\}$. It follows from $\varphi^{'} \in H^{1}$ that for almost all $\theta \in E_{1, \lambda}$, 
$\varphi^{'}(r_ne^{i\theta})$ has a nontangential limit $\varphi^{'}(e^{i\theta})$. Thus, we conclude by Fatou's lemma that, 
$$\Lambda^{1}(D(w, \lambda) \cap \partial\Omega) = \int_{E_{1, \lambda}}|\varphi^{'}(e^{i\theta})|d\theta \leqslant \|T\|\lambda.$$
This proves the assertion when $\partial\Omega$ is bounded.

Now suppose $\partial \Omega$ is unbounded.  Without loss of generality, we may assume $\varphi (0) = 0$. 
It is easy to see that the Ahlfors-regular constants of  $\gamma_n$ are uniformly bounded by a universal constant $C$. Thus, we conclude by Lemma 3 that the Ahlfors-regular constants of $\Gamma_n$ are uniformly bounded by the constant $C_1 \overset{{\rm def}}{=} 2max\{3C\|T\|, 27C\}$. By (\ref{def2})  the image $L_n$ of $\Gamma_n$ under the mapping  $z \mapsto 1/z$ are also Ahlfors-regular with the same uniform constant $C_1$. It follows that $L_n$ are rectifiable with uniformly bounded length.  Let $\psi (z) = 1/\varphi(\frac{1}{z})$ be a conformal homeomorphism of $\Delta^{*}$ into $\mathbb{C}$. Then we have $\psi^{'}(z) \in H^1 $ in $\Delta^{*}$ and the boundary of the image $D$ of the  domain $\Omega$ under the mapping $w \mapsto 1/w$ is rectifiable. Then the proof goes as the first case. We can obtain that $\partial D$ is an Ahlfors-regular curve. It follows from (\ref{def2}) that $\partial \Omega$ is also an Ahlfors-regular curve. This proves the assertion. 
\end{proof}

\begin{remark}
When $\partial\Omega$ is bounded, the uniform boundedness of $l_n = \Lambda^1(\Gamma_n)$ can also be proved by using an explicit construction. 

Suppose $l_n \to \infty$ as $n \to \infty$. Take $\varepsilon_n = \frac{1}{l_n} - \frac{1}{l_{n+1}},\;n = 1, 2, 3, \cdots.$ Then
$\sum\varepsilon_n = 1/l_1$ implies $\sum\varepsilon_n$ is convergent. Let $d\nu = d\sum\varepsilon_n\Lambda^1|_{\gamma_{n + 1}}$. For any $z \in \mathbb{S}$ and 
$0 < h < 1$, we have 
$$\nu(S) \leqslant 2h\sum\varepsilon_n$$
for every sector
$$S = \{re^{i\theta}:\; 1 - h \leqslant r < 1, \; |\theta - \theta_0| \leqslant h\}.$$
Thus, $\nu \in CM(\Delta)$. On the other hand, noting that $\sum\varepsilon_n l_{n+1} > \int_0^{1/l_1}\frac{1}{x}dx$, we have $\sum\varepsilon_n l_{n+1}$ is divergent. We conclude that $$d\mu = |(\varphi^{-1})^{'}|^{-1} d\nu\circ\varphi^{-1} = d\sum\varepsilon_n\Lambda^1|_{\Gamma_{n + 1}}.$$
Then $\mu$ is not a Carleson measure on $\Omega$. Contradiction.
\end{remark}

\section{On the reflections of chord-arc curves and asymptotically smooth curves}

In this section, we show that admitting a quasiconformal reflection, which is ``conformal"  near the curve in a sense given precisely by a Carleson measure condition, characterizes the chord-arc curve with small norm, and that the similar result, with the Carleson measure replaced by the vanishing Carleson measure, characterizes the asymptotically smooth curve in some condition. 

\subsection{Chord-arc curves}

A locally rectifiable curve $\Gamma$ is a chord-arc curve if $l_{\Gamma}(z_1, z_2) \leqslant k|z_1 - z_2|$ for all $z_1, z_2 \in \Gamma$, where $l_{\Gamma}(z_1, z_2)$ denotes the length of the shortest arc of $\Gamma$ joining $z_1$ and $z_2$. The smallest such $k$ is called the chord-arc constant. 

It is a well known fact that a chord-arc curve $\Gamma$ is the image of the unit circle under a bilipschitz mapping on the plane, that is, there exists a mapping $\rho: \mathbb{C} \to \mathbb{C}$ such that $\rho(\mathbb{S}) = \Gamma$ and $C^{-1}|z - w| \leqslant |\rho(z) - \rho(w)| \leqslant C|z - w|$ for all $z, w \in \mathbb{C}$. Bilipschitz mappings preserve Hausdorff dimension, and though they are a very special class of quasiconformal mappings, no characterization has been found in terms of their complex dilatation.  See \cite{AG}, \cite{HWG} for more results on this topic. 

The complex dilatations whose associated quasicircles are chord-arc curves with small constant are well understood. 
\addtocounter{theorem}{1}
\begin{theorem}[see \cite{AZ}, \cite{Se}]\label{chord}
Let $\varphi$ be a conformal mapping from the unit disk $\Delta$ onto a simply connected domain $\Omega$. Then $\Gamma = \partial\Omega$ is a chord-arc curve with small constant if and only if $\varphi$ has a quasiconformal extension with complex dilatation $\nu$ such that 
$|\nu|^2/(|z|^2 - 1)$ is a Carleson measure with small norm.
\end{theorem}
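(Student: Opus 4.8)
The plan is to prove the two implications by chaining two equivalences through the \emph{conformal welding} of $\Gamma$. Let $F\colon\Delta^{*}\to\Omega^{*}:=\hat{\mathbb C}\setminus\overline{\Omega}$ be the exterior Riemann map, normalised to fix the point at infinity, and put $g:=F^{-1}\circ\varphi\big|_{\mathbb S}\colon\mathbb S\to\mathbb S$. Since $\Gamma$ is a quasicircle, $\varphi$ admits quasiconformal extensions to $\hat{\mathbb C}$, and there is an elementary dictionary between them and quasiconformal extensions of $g$: if $\Phi$ extends $\varphi$ with $\Phi(\Delta^{*})=\Omega^{*}$ and complex dilatation $\nu$ on $\Delta^{*}$, then $G:=F^{-1}\circ\Phi\big|_{\Delta^{*}}$ is a quasiconformal self-map of $\Delta^{*}$ extending $g$ with the same dilatation $\nu$ (post-composition with the conformal map $F^{-1}$ leaves the complex dilatation unchanged); conversely, from a quasiconformal extension $G$ of $g$ one recovers $\Phi$ by setting $\Phi=\varphi$ on $\overline{\Delta}$ and $\Phi=F\circ G$ on $\Delta^{*}$. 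Hence the hypothesis ``$\varphi$ has a quasiconformal extension with $|\nu(z)|^{2}(|z|^{2}-1)^{-1}\,dx\,dy$ a Carleson measure of small norm on $\Delta^{*}$'' is literally the same as ``$g$ has a quasiconformal extension to $\Delta^{*}$ with that Carleson property'', and Theorem~\ref{chord} reduces to two statements about $g$.

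\textbf{Step (i): the welding sees the chord-arc constant.} The claim is that $\Gamma$ is a chord-arc curve with small constant if and only if $g$ is strongly quasisymmetric with small norm, i.e.\ $g$ is absolutely continuous, $\log g'\in BMO(\mathbb S)$ has small norm, and $g'$ is an $A_{\infty}$-weight with characteristic close to $1$. This is the small-constant refinement of the classical identification (Coifman--Meyer, Pommerenke) of chord-arc (Lavrentiev) curves with curves whose welding is strongly quasisymmetric: a chord-arc curve with constant close to $1$ is a bilipschitz image of $\mathbb S$ with constant close to $1$, so its arclength measure pulls back through $\varphi$ and through $F$ to measures on $\mathbb S$ that are $A_{\infty}$ with characteristic close to $1$, whence the same for the Radon--Nikodym density of $g$; the converse is read backwards. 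This step is essentially free of Carleson measures and is where the definition of the chord-arc constant enters directly.

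\textbf{Step (ii): the welding norm and the Carleson extension.} The claim is that $g$ is strongly quasisymmetric with small norm if and only if $g$ has a quasiconformal extension $G$ to $\Delta^{*}$ with $|\mu_{G}(z)|^{2}(|z|^{2}-1)^{-1}\,dx\,dy$ a Carleson measure of small norm. The forward direction is the small-norm version of the Beurling--Ahlfors/Douady--Earle extension combined with Dahlberg's $A_{\infty}$ estimates (Fefferman--Kenig--Pipher in the half-plane; the disc case is standard in BMO-Teichm\"uller theory, cf.\ \cite{AZ}): one produces an explicit extension whose dilatation's Carleson norm is bounded, with a universal constant near the origin, by $\|\log g'\|_{BMO(\mathbb S)}$. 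The converse is the boundary-trace counterpart: a quasiconformal self-map of $\Delta^{*}$ whose dilatation has small Carleson norm is asymptotically conformal at $\mathbb S$ in an $L^{2}$-averaged (Carleson) sense, so its boundary values $g$ push Lebesgue measure forward to a measure that is $A_{\infty}$-close to Lebesgue, i.e.\ $g$ is strongly quasisymmetric with small norm. Combining Steps (i) and (ii) with the dictionary of the first paragraph proves the theorem; as a consistency check, in the backward direction the argument in the proof of Theorem~\ref{Z2} shows directly that $\Phi$ carries the Ahlfors-regular foliation $\{|z|=r\}_{r>1}$ of $\Delta^{*}$ to Ahlfors-regular curves with constants tending to $2$, which is the geometric shadow of $g'$ having $A_{\infty}$-characteristic near $1$.

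The hard part is that every link must be \emph{quantitative at the trivial value}, not merely bounded: the extension operator used in Step (ii) and the trace estimate used in its converse must have a universally bounded modulus of continuity at the identity, and the conversions $\varphi\leftrightarrow F\leftrightarrow g$ in Step (i) must relate the chord-arc constant, $\|\log g'\|_{BMO}$ and the $A_{\infty}$-characteristic with constants that do not degenerate as these quantities approach their trivial values. A further subtlety is that the Carleson smallness of $\nu$ does not force $\|\nu\|_{\infty}$ to be small: since $(|z|^{2}-1)^{-1}$ is not integrable over a half-neighbourhood of any boundary point, a finite Carleson norm already forces $|\nu|$ to decay to $0$ at $\mathbb S$ in the Carleson-mean sense, but only this near-boundary part of $\nu$ is controlled, and in deriving the small chord-arc constant one must localise so that the possibly large interior part of $\nu$ --- which only perturbs $\varphi$ away from $\mathbb S$ and is irrelevant to the small-scale geometry of $\Gamma$ --- does not enter. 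Executing these quantitative estimates is exactly what is carried out in \cite{AZ} and \cite{Se}.
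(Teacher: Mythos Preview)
The paper does not prove this theorem at all: it is stated as a known result with the attribution ``see \cite{AZ}, \cite{Se}'' and is then used as a black box in the proof of Theorem~\ref{reflection1}. There is therefore no proof in the paper to compare your proposal against.

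Your outline is a faithful high-level summary of the strategy in those cited references --- passing to the conformal welding $g$, identifying small chord-arc constant with small strong-quasisymmetry norm of $g$, and linking the latter to a Carleson condition on the dilatation of a quasiconformal extension --- and you correctly flag the quantitative-at-the-origin issue as the crux. But as written it is a proof \emph{plan}, not a proof: Step~(i) asserts the small-constant refinement of the Coifman--Meyer/Pommerenke equivalence without carrying it out, and Step~(ii) invokes the extension and trace estimates by name rather than executing them. Your own final sentence concedes this. If the intent was to supply a self-contained proof for the paper, the proposal falls short precisely where the actual work lies; if the intent was only to sketch why the cited references suffice, then what you have written is adequate as an informal gloss, though the paper itself makes no attempt even at that and simply quotes the result.
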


Let $\Gamma$ be a Jordan curve bounding the domains $\Omega$ and $\Omega^{*}$. A sense-reversing quasiconformal mapping $f$ of the plane which maps $\Omega$ onto $\Omega^{*}$ is a quasiconformal reflection in $\Gamma$ if $f$ keeps every point of $\Gamma$ fixed. It is well known that a Jordan curve admits a quasiconformal reflection if and only if it is a quasicircle. 

The result in \cite{Zi94} states that if a quasicircle $\Gamma$ admits a quasiconformal reflection $f$ whose complex dilatation $\mu$ satisfies 
$|\mu|^2dist(w, \Gamma)^{-1}$ is a Carleson measure with small norm, then  $\Gamma$ is a chord-arc curve with small constant. By applying Theorem \ref{chord} and the boundedness of the push-forward operator  induced by a conformal mapping from the unit disk $\Delta$ onto the chord-arc domain, the converse can be drawn.

\begin{theorem}\label{reflection1} Let  $\Gamma$ be a quasicircle. Then the following two statements are equivalent: 
\begin{description}
\item[(1)] $\Gamma$ admits a quasiconformal reflection $f$ whose complex dilatation $\mu$ satisfies that
$|\mu|^2dist(w, \Gamma)^{-1}$ is a Carleson measure with small norm.
\item[(2)] $\Gamma$ is a chord-arc curve with small constant.
\end{description}
\end{theorem}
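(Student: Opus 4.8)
The implication $(1)\Rightarrow(2)$ is precisely the theorem of Zinsmeister \cite{Zi94} recalled above, so my plan is to prove $(2)\Rightarrow(1)$. Let $\Omega,\Omega^{*}$ be the two Jordan domains with $\Gamma=\partial\Omega=\partial\Omega^{*}$ (so $\Omega^{*}=\hat{\mathbb{C}}\setminus\bar\Omega$), and fix conformal maps $\varphi:\Delta\to\Omega$ and $\psi:\Delta\to\Omega^{*}$. Since $\Gamma$ is chord-arc with small constant, I would apply Theorem \ref{chord} to both $\varphi$ and $\psi$, obtaining quasiconformal extensions $\Phi$ of $\varphi$ and $\Psi$ of $\psi$ to $\hat{\mathbb{C}}$, conformal on $\Delta$, whose complex dilatations $\nu_{\varphi},\nu_{\psi}$ are supported in $\Delta^{*}$ and satisfy that $|\nu_{\varphi}|^{2}(|z|^{2}-1)^{-1}$ and $|\nu_{\psi}|^{2}(|z|^{2}-1)^{-1}$ are Carleson measures on $\Delta^{*}$ of small norm, say $\leqslant\varepsilon$. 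Writing $j(z)=1/\bar z$ (which fixes $\mathbb{S}$ pointwise, is sense-reversing, and is an involution), I would then set $f=\Phi\circ j\circ\Phi^{-1}$ and $f'=\Psi\circ j\circ\Psi^{-1}$; each of $f,f'$ is an involutive sense-reversing quasiconformal homeomorphism of $\hat{\mathbb{C}}$ fixing $\Gamma$ pointwise and interchanging $\Omega$ with $\Omega^{*}$, i.e. a quasiconformal reflection in $\Gamma$. The aim is to show $f$ has the required Carleson property on $\Omega$, $f'$ has it on $\Omega^{*}$, and then to splice them.

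The key computation is the estimate on $\Omega$. There $\Phi^{-1}=\varphi^{-1}$, and $f=\Phi\circ j\circ\varphi^{-1}$ is a composition conformal $\circ$ anticonformal $\circ$ quasiconformal (the quasiconformal piece being $\Phi|_{\Delta^{*}}$); a routine chain-rule computation then gives, for $w\in\Omega$ and $\zeta=\varphi^{-1}(w)$, that $|\mu_{f}(w)|=|\nu_{\varphi}(1/\bar\zeta)|$, the (anti)conformal factors contributing only a unimodular multiple. By Koebe's distortion theorem (Lemma 1), $\mathrm{dist}(w,\Gamma)\asymp|\varphi'(\zeta)|(1-|\zeta|^{2})$, so the pull-back under $\varphi$ of the measure $dm(w)=|\mu_{f}(w)|^{2}\mathrm{dist}(w,\Gamma)^{-1}\,dA(w)$ on $\Omega$ is
\[ d(\varphi^{*}m)(\zeta)=|\nu_{\varphi}(1/\bar\zeta)|^{2}\,\mathrm{dist}(\varphi(\zeta),\Gamma)^{-1}\,|\varphi'(\zeta)|\,dA(\zeta)\ \asymp\ \frac{|\nu_{\varphi}(1/\bar\zeta)|^{2}}{1-|\zeta|^{2}}\,dA(\zeta)=:dn(\zeta). \]
Since $z\mapsto 1/\bar z$ carries a Carleson box of $\Delta^{*}$ onto a set comparable, at the same scale, to a Carleson box of $\Delta$, and transforms $|\nu_{\varphi}(z)|^{2}(|z|^{2}-1)^{-1}dA(z)$ into a measure comparable to $dn$ near $\mathbb{S}$, I get $n\in CM(\Delta)$ with $\|n\|_{*}\lesssim\varepsilon$.

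Now comes the use of the push-forward operator. Because $\Gamma$ is chord-arc it is Ahlfors-regular, so by Theorem \ref{Z2} (together with Lemma 2) the push-forward operator $(\varphi^{-1})^{*}:CM(\Delta)\to CM(\Omega)$ is bounded, with norm controlled by the Ahlfors-regular constant of $\Gamma$; for chord-arc curves of small constant this constant, hence this operator norm, is bounded by an absolute constant. As $m\asymp(\varphi^{-1})^{*}n$, this gives $m\in CM(\Omega)$ with $\|m\|_{*}\lesssim\varepsilon$. The identical argument, with $\psi,f'$ in place of $\varphi,f$ (note $\mathrm{dist}(\cdot,\Gamma)=\mathrm{dist}(\cdot,\partial\Omega^{*})$ and $\partial\Omega^{*}=\Gamma$ is Ahlfors-regular), shows $|\mu_{f'}|^{2}\mathrm{dist}(\cdot,\Gamma)^{-1}dA\in CM(\Omega^{*})$ with norm $\lesssim\varepsilon$. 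I would finish by setting $g=f$ on $\bar\Omega$ and $g=f'$ on $\overline{\Omega^{*}}$: the two definitions agree on $\Gamma$ (both equal the identity), so $g$ is a sense-reversing homeomorphism of $\hat{\mathbb{C}}$ that is quasiconformal off the rectifiable curve $\Gamma$; since rectifiable curves are removable for quasiconformal homeomorphisms, $g$ is a quasiconformal reflection in $\Gamma$ whose dilatation is $\mu_{f}$ on $\Omega$ and $\mu_{f'}$ on $\Omega^{*}$. As $\Gamma$ has zero area, splitting $\int_{D(w_{0},r)}$ into its parts in $\Omega$ and in $\Omega^{*}$ yields $\int_{D(w_{0},r)}|\mu_{g}|^{2}\mathrm{dist}(\cdot,\Gamma)^{-1}\,dA\lesssim\varepsilon r$ for all $w_{0}\in\Gamma$ and $0<r<\mathrm{diam}\,\Gamma$, which is $(1)$.

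The step I expect to be the genuine obstacle is the exterior side. On $\Omega^{*}$ the map $\Phi^{-1}$ is only quasiconformal, not conformal, so the clean change of variables used on $\Omega$ is unavailable; building the second reflection $f'$ from the exterior conformal map $\psi$ and gluing is exactly what circumvents this, but it forces one to invoke removability of rectifiable curves for quasiconformal mappings (here is where it matters that a chord-arc curve is rectifiable). An alternative would be to establish directly a quasiconformal Koebe-type distance estimate $\mathrm{dist}(\Phi(z),\Gamma)\asymp|\varphi'(1/\bar z)|(|z|^{2}-1)/|z|^{2}$ for $z\in\Delta^{*}$ and work with the single reflection $f$ throughout, but that route looks harder. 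A secondary point requiring care is quantitative: I must check that the smallness of the Carleson norm is not lost under the push-forward operator, i.e. that the operator norm in Theorem \ref{Z2} is uniformly bounded over chord-arc curves of sufficiently small constant.
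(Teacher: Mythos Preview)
Your core argument for $(2)\Rightarrow(1)$ is the same as the paper's: apply Theorem~\ref{chord} to get a quasiconformal extension $\Phi$ of $\varphi$ with $|\nu_\varphi|^2/(|z|^2-1)\in CM(\Delta^*)$ small, form the reflection $\Phi\circ j\circ\Phi^{-1}$, compute its dilatation on $\Omega$ via the chain rule, use Koebe to turn $\mathrm{dist}(w,\Gamma)$ into $(1-|\zeta|^2)|\varphi'(\zeta)|$, and then invoke Theorem~\ref{Z2} to push the resulting Carleson measure on $\Delta$ forward to $CM(\Omega)$.

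Where you depart from the paper is on the exterior side. The paper simply stops after establishing the Carleson estimate on $\Omega$ for the single reflection $f=\varphi\circ(1/\overline{\varphi^{-1}})$; it does not treat $\Omega^*$ at all. You instead build a second reflection $f'$ from the exterior Riemann map $\psi$, run the identical argument to get the estimate on $\Omega^*$, and glue $f|_{\bar\Omega}$ to $f'|_{\bar{\Omega^*}}$ using removability of the rectifiable curve $\Gamma$. This is a legitimate and more careful route: the paper's displayed identity $|\mu(w)|=|\nu(1/\overline{\varphi^{-1}(w)})|$ is only valid for $w\in\Omega$ (for $w\in\Omega^*$ the point $1/\overline{\varphi^{-1}(w)}$ lies in $\Delta$, where $\nu=0$, while the dilatation of $f$ there really comes from the quasiconformal piece $\varphi^{-1}|_{\Omega^*}$), so the paper's argument as written is one-sided. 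Your gluing costs an appeal to quasiconformal removability of rectifiable curves, but this is classical. Note, however, that your glued map $g$ is no longer an involution; this is harmless since the paper's definition of quasiconformal reflection does not require $f^2=\mathrm{id}$.

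Your quantitative concern at the end is well placed and is also left implicit in the paper: the push-forward bound in Theorem~\ref{Z2} depends only on the Ahlfors-regular constant of $\Gamma$, which for a chord-arc curve is controlled by the chord-arc constant, so the operator norm is uniformly bounded once the chord-arc constant is small. This is enough to preserve smallness of the Carleson norm.
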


\begin{proof}
$(1) \Rightarrow (2)$ was given in \cite{Zi94}. We will show that $(2) \Rightarrow (1)$. 

Suppose $\Gamma$ is a chord-arc curve with small constant bounding the domains $\Omega$ and $\Omega^{*}$. 
Let $\varphi$ be a conformal mapping from the unit disk $\Delta$ onto the domain $\Omega$.  By Theorem \ref{chord},  $\varphi$ has a quasiconformal extension  (still denoted by $\varphi$)  with complex dilatation $\nu$ such that 
$|\nu|^2/(|z|^2 - 1)$ is a Carleson measure with small norm (for example, Douady-Earle extension (see \cite{CZ}) or modified Beurling-Ahlfors extension constructed by Semmes in \cite{Se}). Set $f = \varphi \circ \frac{1}{\overline{\varphi^{-1}}}$. Clearly, $f$ is a quasiconformal reflection in $\Gamma$. Set 
$\mu(w) = \frac{\partial f}{\partial w}/\frac{\partial f}{\partial \bar{w}}$. Then, 
\begin{equation*}
|\overline{\mu(w)}| = \bigg|\nu\Big(\frac{1}{\overline{\varphi^{-1}(w)}}\Big)\frac{(\frac{1}{\varphi^{-1}})_w}{\overline{(\frac{1}{\varphi^{-1}})_w}}\bigg| = \bigg|\nu\Big(\frac{1}{\overline{\varphi^{-1}(w)}}\Big)\bigg|.
\end{equation*}
By Koebe distortion theorem, 
\begin{equation*}
\frac{|\mu(w)|^2}{dist(w, \Gamma)} \leqslant 4 \frac{|\nu(z)|^2}{|z|^2 - 1}\circ \overline{\Big(\frac{1}{\varphi^{-1}(w)}\Big)}\, \overline{\Big(\frac{1}{\varphi^{-1}(w)}\Big)_w}. 
\end{equation*}
The theorem follows by Theorem \ref{Z2}.
\end{proof}

\subsection{Asymptotically smooth curves}

A Jordan curve $\Gamma$ is called an asymptotically smooth curve in the sense of Pommerenke \cite[p.172]{Po92} if 
$\lim_{|z_1 - z_2| \to 0}l_{\Gamma}(z_1, z_2)/|z_1 - z_2| \to 1$ for any two points $z_1$ and $z_2$ of $\Gamma$,  where $l_{\Gamma}(z_1, z_2)$ denotes the length of the shortest arc of $\Gamma$ joining $z_1$ and $z_2$.

In \cite{Po78}, Pommerenke has obtained the following 
\begin{theorem}\label{smooth}
Let $\varphi$ be a conformal mapping from the unit disk $\Delta$ onto a simply connected domain $\Omega$. Then $\Gamma = \partial\Omega$ is an asymptotically smooth curve if and only if $\varphi$ has a quasiconformal extension with complex dilatation $\nu$ such that 
$|\nu|^2/(|z|^2 - 1)$ is a vanishing Carleson measure.
\end{theorem}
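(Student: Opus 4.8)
The plan is to route both implications through the membership $\log\varphi'\in VMOA(\Delta)$, via the chain of equivalences: $\Gamma$ asymptotically smooth $\Longleftrightarrow$ $\log\varphi'\in VMOA(\Delta)$ $\Longleftrightarrow$ $\varphi$ has a quasiconformal extension to $\Delta^{*}$ whose complex dilatation $\nu$ satisfies $|\nu|^{2}/(|z|^{2}-1)\in CM_{0}$. The second equivalence is the ``vanishing'' counterpart of the circle of ideas behind Theorem \ref{chord}; the first is the geometric--analytic dictionary going back to Pommerenke \cite{Po78}.

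\emph{Step 1: asymptotic smoothness $\Longleftrightarrow\log\varphi'\in VMOA$.} If $\Gamma$ is asymptotically smooth, then $l_{\Gamma}(z_{1},z_{2})/|z_{1}-z_{2}|\to1$ forces $\Gamma$ to be a chord-arc curve, hence a $BJ$ curve by \cite{Zi84}, so that $\log\varphi'\in BMOA(\Delta)$ by \cite{BJ}, $\varphi'\in H^{1}$, and $|\varphi'|$ is an $A_{\infty}$ weight on $\mathbb{S}$. On a short arc $I\subset\mathbb{S}$ with endpoints $e^{i\theta_{1}},e^{i\theta_{2}}$ one has
\[
\varphi(e^{i\theta_{1}})-\varphi(e^{i\theta_{2}})=i\int_{I}\varphi'(e^{it})e^{it}\,dt,\qquad l_{\Gamma}\big(\varphi(e^{i\theta_{1}}),\varphi(e^{i\theta_{2}})\big)=\int_{I}|\varphi'(e^{it})|\,dt,
\]
so, with $\psi(t)=\arg\varphi'(e^{it})+t$ and using $\cos x\geqslant 1-x^{2}/2$, the ratio of the right-hand sides obeys $1\leqslant l_{\Gamma}/|\varphi(e^{i\theta_{1}})-\varphi(e^{i\theta_{2}})|\leqslant(1-\tfrac12 V_{I})^{-1}$, where $V_{I}$ is the minimal mean-square deviation of $\psi$ on $I$ measured against $|\varphi'|\,dt$, and a complementary lower bound shows this ratio exceeds $1$ by an amount comparable to $V_{I}$. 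Thus asymptotic smoothness is equivalent to $V_{I}\to0$ as $|I|\to0$, which — using that $|\varphi'|\,dt$ is $A_{\infty}$ — is equivalent to $\arg\varphi'\in VMO(\mathbb{S})$, and then, since the conjugation operator preserves $VMO$, to $\log|\varphi'|\in VMO(\mathbb{S})$, i.e.\ to $\log\varphi'\in VMOA$. For the converse one notes that $\log\varphi'\in VMOA$ by itself already yields rectifiability: writing $\log\varphi'=g_{1}+g_{2}$ with $g_{1}\in H^{\infty}$ and $\|g_{2}\|_{BMO}$ as small as we wish, John--Nirenberg gives $\varphi'=e^{g_{1}}e^{g_{2}}\in\bigcap_{p<\infty}H^{p}$; $\Gamma$ is then an asymptotically conformal Jordan curve and $\varphi$ extends homeomorphically to $\overline{\Delta}$, so the displayed identities read in reverse give $l_{\Gamma}/|{\cdot}|\to1$.

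\emph{Step 2: $\log\varphi'\in VMOA\Longleftrightarrow|\nu|^{2}/(|z|^{2}-1)\in CM_{0}$.} For ``$\Longleftarrow$'' I would use the vanishing version of the estimate $\|\log\varphi'\|_{BMOA}\lesssim \| |\nu|^{2}/(|z|^{2}-1) \|_{*}^{1/2}$ of \cite{Zi89}, \cite{AZ}: applying it to the restriction of $|\nu|^{2}/(|z|^{2}-1)$ to small Carleson boxes of $\Delta^{*}$ shows that, when the measure lies in $CM_{0}$, the local $BMO$ norm of $\log\varphi'$ over small boxes tends to $0$, i.e.\ $\log\varphi'\in VMOA$. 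For ``$\Longrightarrow$'' one constructs the extension explicitly — a Douady--Earle extension, or a Semmes-type modified Beurling--Ahlfors extension, of the boundary welding map attached to $\varphi$, as in \cite{Se}, \cite{CZ} — and checks, exactly as in the small-constant regime underlying Theorem \ref{chord}, that $VMO$ boundary data produce a dilatation $\nu$ with $|\nu|^{2}/(|z|^{2}-1)\in CM_{0}$.

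\emph{Main obstacle.} The hard part will be Step 1: asymptotic smoothness is intrinsically a statement about the arc-length measure $|\varphi'|\,dt$, whereas $VMOA$ concerns Lebesgue mean oscillation, so the transfer between them requires establishing — not merely invoking — that $|\varphi'|\,dt$ is an $A_{\infty}$ weight whose constant on an arc $I$ tends to $1$ as $|I|\to0$, as a consequence of $\log|\varphi'|\in VMO$; one must also be careful that, among the two subarcs of $\Gamma$ joining two nearby points, the shorter one has a short preimage arc on $\mathbb{S}$. The construction side of Step 2 (verifying the $CM_{0}$ bound for the chosen extension) is technical but parallels the established small-norm computations.
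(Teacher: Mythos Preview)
The paper does not prove this theorem; it is quoted from Pommerenke \cite{Po78} and then used as a black box in the proof of Theorem~\ref{reflection2}. There is therefore no proof in the paper to compare your attempt against.

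For what it is worth, your two-step route through $\log\varphi'\in VMOA(\Delta)$ is the standard one and is essentially what the combined references carry out: Step~1 is Pommerenke's argument in \cite{Po78} (see also \cite{Po92}), and Step~2 is the vanishing-norm refinement of the Astala--Zinsmeister/Semmes/Cui--Zinsmeister results you cite. Your sketch is sound and you have correctly located where the genuine work lies --- the $A_\infty$ transfer between $|\varphi'|\,dt$-weighted and Lebesgue mean oscillation in Step~1, and the explicit $CM_0$ verification for the chosen extension in Step~2.
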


We will apply Theorem \ref{smooth} to give a characterization  for asymptotically smooth curves  in certain premise. Precisely, we will prove the following

\begin{theorem}\label{reflection2}
Let  $\Gamma$ be a chord-arc curve. Then the following two statements are equivalent:
\begin{description}
\item[(1)] $\Gamma$  admits a quasiconformal reflection $f$ whose complex dilatation $\mu$ satisfies that
$|\mu|^2dist(w, \Gamma)^{-1}$ is a vanishing Carleson measure.
\item[(2)] $\Gamma$ is asymptotically smooth.
\end{description}
\end{theorem}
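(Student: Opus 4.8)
The plan is to mirror the proof of Theorem \ref{reflection1}, replacing "Carleson measure with small norm" by "vanishing Carleson measure" throughout, and using Theorem \ref{smooth} and Theorem \ref{Omega} in place of Theorem \ref{chord} and Theorem \ref{Z2}. For the direction $(2) \Rightarrow (1)$: assume $\Gamma$ is asymptotically smooth, bounding domains $\Omega$ and $\Omega^{*}$, and let $\varphi$ be a conformal map from $\Delta$ onto $\Omega$. By Theorem \ref{smooth}, $\varphi$ has a quasiconformal extension with complex dilatation $\nu$ such that $|\nu|^2/(|z|^2-1)$ is a vanishing Carleson measure on $\Delta^{*}$ (one may again use the Douady--Earle or Semmes extension). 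Set $f = \varphi \circ \frac{1}{\overline{\varphi^{-1}}}$, a quasiconformal reflection in $\Gamma$. The same pointwise computation as in the proof of Theorem \ref{reflection1} gives, via the Koebe distortion theorem, $|\mu(w)|^2/dist(w,\Gamma) \leqslant 4\,\big(|\nu(z)|^2/(|z|^2-1)\big)\circ \overline{(1/\varphi^{-1}(w))}\cdot \overline{(1/\varphi^{-1}(w))_w}$. The remaining task is to verify that the push-forward under the relevant conformal map of a vanishing Carleson measure is again a vanishing Carleson measure; this is exactly the content of Theorem \ref{Omega}, applied to the chord-arc domain $\Omega$ (a chord-arc curve being an Ahlfors-regular quasicircle, the hypothesis of Theorem \ref{Omega} is met). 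This yields that $|\mu|^2 dist(w,\Gamma)^{-1} \in CM_0$.

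For the direction $(1) \Rightarrow (2)$: assume $\Gamma$ is a chord-arc curve admitting a quasiconformal reflection $f$ with $|\mu|^2 dist(w,\Gamma)^{-1}$ a vanishing Carleson measure, and we must deduce that $\Gamma$ is asymptotically smooth. The strategy is to run the argument of $(1)\Rightarrow(2)$ in Theorem \ref{reflection1} (i.e. the result of \cite{Zi94}) in its "vanishing" form: starting from the reflection $f$, one produces a quasiconformal extension of the conformal map $\varphi: \Delta \to \Omega$ whose dilatation $\nu$ satisfies $|\nu|^2/(|z|^2-1) \in CM_0$, and then invokes Theorem \ref{smooth} to conclude. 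Concretely, composing $f$ with $\varphi$ on the appropriate side and using $\Gamma$ chord-arc (hence $\varphi'$ and $(\varphi^{-1})'$ well-behaved, with $\log\varphi' \in BMOA$ and the pull-back operator bounded on $CM$ and mapping $CM_0$ into $CM_0$ by Theorem \ref{Delta}, since a chord-arc quasicircle is a BJ quasicircle), one transfers the vanishing Carleson condition on $|\mu|^2 dist(w,\Gamma)^{-1}$ back to a vanishing Carleson condition on $\Delta$ for the dilatation of a quasiconformal extension of $\varphi$. The pointwise change-of-variables estimate is again governed by the Koebe distortion theorem, now used to compare $dist(w,\Gamma)$ with $(1-|z|)|\varphi'(z)|$ and to control $|\nu|$ in terms of $|\mu|$.

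The main obstacle is the passage from the qualitative "small norm / vanishing" setting to the "vanishing" setting in the $(1)\Rightarrow(2)$ direction: one must make sure that the argument of \cite{Zi94}, which was phrased for Carleson measures with small norm, upgrades cleanly to vanishing Carleson measures. The key technical point is that all the operators involved — the pull-back under $\varphi$, the composition operations building the extension of $\varphi$ out of $f$, and the Koebe-type pointwise comparisons — respect the class $CM_0$. The boundedness statements of \cite{Zi94} give control of norms; to get the vanishing conclusion one needs, in addition, that these operators send $CM_0$ to $CM_0$, which for the conformal pull-back is precisely Theorem \ref{Delta} (applicable since a chord-arc quasicircle is a BJ quasicircle, indeed an Ahlfors-regular quasicircle), and for the push-forward is Theorem \ref{Omega}. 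Once these functorial facts are in place, the theorem follows by combining them with Theorem \ref{smooth}; the remaining computations are the same routine Koebe distortion estimates already carried out in the proof of Theorem \ref{reflection1}, with $dist(w,\Gamma)$ and $|z|^2-1$ playing their usual roles.
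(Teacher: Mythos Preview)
Your proposal is correct and matches the paper's approach: $(2)\Rightarrow(1)$ via Theorem~\ref{smooth}, the reflection $f=\varphi\circ\frac{1}{\overline{\varphi^{-1}}}$, Koebe, and Theorem~\ref{Omega}; and $(1)\Rightarrow(2)$ by extending the Riemann map as $\varphi(z)=f\circ\varphi(1/\bar z)$ on $\Delta^*$, using Koebe to bound $|\nu|^2/(|z|^2-1)$ by the pull-back of $|\mu|^2/dist(w,\Gamma)$, applying Theorem~\ref{Delta} (chord-arc $\Rightarrow$ BJ quasicircle), and concluding with Theorem~\ref{smooth}. Your worry about ``upgrading the argument of \cite{Zi94}'' is unnecessary: the construction you describe is self-contained and does not rely on the internals of \cite{Zi94} at all---the paper proceeds exactly this way.
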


\begin{proof}
By Theorem \ref{Omega} and Theorem \ref{smooth}, $(2) \Rightarrow (1)$ can be obtained similar to the proof of $(2) \Rightarrow (1)$ in Theorem 
\ref{reflection1}. We will show that $(1) \Rightarrow (2)$.

Let $h$ be a conformal mapping from the unit disk $\Delta$ onto $\Omega$ bounded by $\Gamma$. Define $\varphi$ by $\varphi(z) = h(z)$ in the closure of $\Delta$, and 
$\varphi(z) = f\circ\varphi(\frac{1}{\bar{z}})$ in $\Delta^{*}$. Then $\varphi$ is quasiconformal in the plane, and its complex dilatation $\nu$ satisfies
\begin{equation*}
|\nu(z)| = \bigg|\mu\Big(\varphi(\frac{1}{\bar{z}})\Big)\frac{(\varphi(\frac{1}{\bar{z}}))_{\bar{z}}}{(\overline{\varphi(\frac{1}{\bar{z}})})_z}\bigg|.
\end{equation*}
By Koebe distortion theorem, 
\begin{equation*}
\frac{|\nu(z)|^2}{|z|^2 - 1} 
\leqslant \frac{|\mu(w)|^2}{dist(w, \Gamma)} \circ \varphi\big(\frac{1}{\bar{z}}\big) \Big(\varphi\big(\frac{1}{\bar{z}}\big)\Big)_{\bar{z}}. 
\end{equation*}
It follows from Theorem \ref{Delta} that $|\nu|^2/(|z|^2 - 1)$ is a vanishing Carleson measure. From Theorem \ref{smooth} we obtain $\Gamma$ is an asymptotically smooth curve.
\end{proof}

We end this paper with the following
\begin{remark}
A natural question is whether Theorem \ref{reflection2} is still true if the chord-arc curve is replaced by the quasicircle. 
\end{remark}

\begin{center}
\begin{minipage}{140mm}
{\small{\bf Acknowledgement}. 
The first author would like to thank Professor Yuliang Shen for pointing out an error in the first 
draft of this paper. 
Research was supported by the National Natural Science Foundation of China (Grant Nos. 11501259, 11671175).
}

\end{minipage}
\end{center}

\end{document}